\documentclass[11pt]{article}
\usepackage{a4wide}

\usepackage[a4paper, margin=2.3cm]{geometry}
\usepackage{verbatim}
\usepackage{amsmath,amssymb,amsthm}
\usepackage{thmtools}
\usepackage{thm-restate}
\usepackage{color}
\usepackage{parskip}
\usepackage{hyperref}
\usepackage{parskip}
\usepackage{setspace}
\usepackage{graphicx}
\usepackage{mathtools}
\usepackage[dvipsnames]{xcolor}
\usepackage[thinc]{esdiff}
\usepackage{kotex}
\usepackage{enumitem}
 
\usepackage[capitalise,nameinlink]{cleveref}
\crefname{equation}{}{}
\hypersetup{
    colorlinks=true,
    linkcolor=blue,
    citecolor=magenta,      
    urlcolor=cyan,
}

\theoremstyle{plain}
\newtheorem{thm}{Theorem}[section]
\newtheorem{prop}[thm]{Proposition}
\newtheorem{cor}[thm]{Corollary}

\newtheorem{theorem*}{Theorem}[]

\theoremstyle{definition}

\theoremstyle{remark}

\newcommand{\defeq}{\vcentcolon=}

\newcommand{\F}{\mathbb{F}_{q}^{n}}

\newcommand{\FF}{\mathbb{F}}
\newcommand{\EE}{\mathbb{E}}

\renewcommand{\th}{\theta}
\newcommand{\G}{\Gamma}
\newcommand{\fqn}{\mathbb{F}_{\! q}^{^n}}

\renewcommand{\th}{\theta}

\newcommand{\f}{\mathcal{F}}

\newcommand{\bc}{\begin{center}}
\newcommand{\ec}{\end{center}}

\newcommand{\ra}{\rightarrow}

\newcommand{\el}{\end{lemma}}
\newcommand{\bl}[1]{\begin{lemma}\label{#1}}
\newcommand{\eco}{\end{corollary}}
\newcommand{\bco}[1]{\begin{corollary}\label{#1}}

\newcommand{\old}[1]{}

\old{

} 

\newcommand{\ga}{\alpha}

\newcommand{\raf}[1]{(\ref{#1})}
\newcommand{\beq}[1]{\begin{equation}\label{#1}}

\newcommand{\eeq}{\end{equation}}

\newcommand{\bean}{\begin{eqnarray*}}
\newcommand{\eean}{\end{eqnarray*}}
\newcommand{\bea}[1]{\begin{eqnarray}\label{#1}}
\newcommand{\eea}{\end{eqnarray}}

\newcommand{\mn}{\medskip\noindent }
\newcommand{\bb}{\Big}

\newcommand{\ee}{\EE}
\newcommand{\sfrac}[2]{\mbox{$\frac{#1}{#2}$}}

\begin{document}

\title{Threshold functions for incidence properties in finite vector spaces}
\author{ Jeong Han Kim \thanks{Korea Institute for Advanced Study (KIAS). Email: {\tt jhkim@kias.re.kr}} \and Ben Lund\thanks{Discrete Mathematics Group, Institute for Basic Science (IBS). Email: {\tt benlund@ibs.re.kr}} \and Thang Pham \thanks{University of Science, Vietnam National University, Hanoi. Email: {\tt thangpham.math@vnu.edu.vn}} \and Semin Yoo\thanks{Korea Institute for Advanced Study (KIAS). Email: {\tt syoo19@kias.re.kr}}}
\date{\today}

\maketitle

\begin{abstract}

The main purpose of this paper is to provide threshold functions for the events that a random subset of the points of a finite vector space has certain properties related to point-flat incidences.
Specifically, we consider the events that there is an $\ell$-rich $m$-flat with regard to a random set of points in $\FF_q^n$, the event that a random set of points is an $m$-blocking set, and the event that there is an incidence between a random set of points and a random set of $m$-flats. One of our key ingredients is a stronger version of a recent result obtained by Chen and Greenhill (2021).
\end{abstract}

\section{Introduction}

A fundamental problem in the theory of random sets is that of determining thresholds for monotone properties.
Thresholds were first discovered by Erd\H{o}s and Renyi in the context of random graphs \cite{ER1960}.
Since then, thousands of papers have been written on the subject, both in specific contexts such as for random graphs or random sets of integers, and in the general setting of random subsets of an arbitrary finite set.

Here, we study thresholds for properties of point sets in finite vector spaces.
Hundreds of papers have been written on extremal questions in finite geometry, but we are only aware of a single work on thresholds in this context, by Chen and Greenhill \cite{chen}.
Our contributions are to improve the main result of Chen and Greenhill, and to study thresholds for several properties defined by the incidences between points and affine subspaces.

In this paper, we are concerned with thresholds for local properties, defined as follows.
Let $U_1, U_2, \ldots$ be a sequence of finite sets, with $|U_{i+1}|>|U_i|$ for all $i \geq 1$.
For each $i$, let $\f_i$ be a family of subsets of $U_i$, each of size $\ell_i$, and let $X_i$ be a random subset of $U_i$, such that each element of $U_i$ is in $X_i$ with probability $\theta_i$, independently of all other elements of $U_i$.
Let $\Gamma = \Gamma_i$ be the event that $X_i$ contains at least one element of $\f_i$.
A function $t=t(i)$ is a {\em threshold} for $\Gamma$ if
\begin{enumerate}
    \item $\Pr[\Gamma] \rightarrow 0$ if $\theta = o(t)$, and
    \item $\Pr[\Gamma] \rightarrow 1$ if $\theta = \omega(t)$.
\end{enumerate}
A function $t$ has the stronger property of being a {\em sharp threshold} for $\Gamma$ if, for every $\epsilon > 0$,
\begin{enumerate}
    \item $\Pr[\Gamma] \rightarrow 0$ if $\theta \leq (1-\epsilon)t$, and
    \item $\Pr[\Gamma] \rightarrow 1$ if $\theta \geq (1 + \epsilon)t$.
\end{enumerate}
Clearly, if $X$ contains an element of $\f$ and $X \subset Y$, then $Y$ contains an element of $\f$; hence, the event $\Gamma$ describes a {\em monotone} property of sets.
Bollob\'as and Thomason \cite{bolo2} showed that every monotone property has a threshold function.

Chen and Greenhill proved the following result on threshold functions for properties of point sets in finite vector spaces, and used it to derive threshold functions for several specific geometrically defined properties.
\begin{thm}[Chen-Greenhill, \cite{chen}]\label{cf-chengreenhill}
Let $\mathcal{F}$ be a family of $\ell$-subsets of $\mathbb{F}_q^n$, and $X$ be a random subset chosen as above. Suppose there exist integer constants $b>0$, $c\ge 0$ such that $|\f|=\Theta (q^{bn-c})$. 
Assume that for any set  $S$  of $k$ distinct points in $\mathbb{F}_{q}^{n}$, the number of elements of $\mathcal{F}$ which contain~$S$ is 
\begin{equation}\label{condition3}
  \begin{cases}
   O(q^{(b-k)n-c}) & \text{if $1\leq k\leq b-1$}, \\
   O(1) &\text{otherwise}.
  \end{cases}
\end{equation}

Then the event $\Gamma=\Gamma(\mathcal{F}, \theta)$ that there is a subset of $X$ belonging to $\mathcal{F}$ has a threshold function $q^{(c-bn)/\ell}$, as $q+n\to \infty$.
\end{thm}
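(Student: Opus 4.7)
The plan is to apply the second moment method to the counting random variable
\[
Z := \#\{F \in \mathcal{F} : F \subseteq X\},
\]
so that $\Gamma = \{Z \ge 1\}$. Since each $F$ has $\ell$ elements and the coordinates of $X$ are independent Bernoulli$(\theta)$ random variables, we have
\[
\mathbb{E}[Z] = |\mathcal{F}|\,\theta^{\ell} = \Theta(q^{bn-c})\,\theta^{\ell},
\]
which heuristically pinpoints the threshold at $t := q^{(c-bn)/\ell}$. In the subcritical regime $\theta = o(t)$ we get $\mathbb{E}[Z]=o(1)$, so Markov's inequality $\Pr[\Gamma] \le \mathbb{E}[Z]$ immediately yields $\Pr[\Gamma] \to 0$.

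For the supercritical regime $\theta = \omega(t)$, where $\mathbb{E}[Z] \to \infty$, the plan is to use the Paley--Zygmund bound $\Pr[Z>0] \ge \mathbb{E}[Z]^{2}/\mathbb{E}[Z^{2}]$ and to prove that $\mathbb{E}[Z^{2}] = (1+o(1))\mathbb{E}[Z]^{2}$. Expanding,
\[
\mathbb{E}[Z^{2}] = \sum_{F,F' \in \mathcal{F}} \theta^{|F \cup F'|} = \sum_{k=0}^{\ell} M_{k}\, \theta^{2\ell-k},
\]
where $M_{k}$ counts ordered pairs $(F,F') \in \mathcal{F}^{2}$ with $|F \cap F'| = k$. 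The term $k = 0$ is bounded by $|\mathcal{F}|^{2}\theta^{2\ell}=(1+o(1))\mathbb{E}[Z]^{2}$, matching the denominator exactly, so the task reduces to proving that each $k \ge 1$ contributes a vanishing share of $\mathbb{E}[Z]^{2}$.

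The key step is the bound on $M_{k}$ via the hypothesis \eqref{condition3}, using the standard overcount
\[
M_{k} \le \sum_{S \in \binom{\mathbb{F}_{q}^{n}}{k}} |\{F \in \mathcal{F} : F \supseteq S\}|^{2}.
\]
For $1 \le k \le b-1$, the first case of \eqref{condition3} gives $M_{k} = O(q^{(2b-k)n-2c})$, so the corresponding contribution to $\mathbb{E}[Z^{2}]/\mathbb{E}[Z]^{2}$ is $O((q^{n}\theta)^{-k})$. For $b \le k \le \ell$, the second case of \eqref{condition3} together with the identity $\sum_{S}|\{F \supseteq S\}| = |\mathcal{F}|\binom{\ell}{k}$ yields $M_{k} = O(|\mathcal{F}|\binom{\ell}{k})$, whose contribution is $O(\binom{\ell}{k}/\mathbb{E}[Z])$. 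Both tend to $0$ because $\mathbb{E}[Z] \to \infty$ forces $q^{n}\theta \to \infty$ as well (using $\ell \ge b$ and $c \ge 0$, since $(q^{n}\theta)^{\ell} = q^{n(\ell-b)+c}\cdot q^{bn-c}\theta^{\ell}$). Summing the $O(\ell)$ error terms over $k$ completes the argument.

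The main obstacle is the interplay between the two regimes of the hypothesis \eqref{condition3}, whose exponent $(b-k)n-c$ is engineered precisely so that no single $k \le b-1$ beats the disjoint-pair contribution, while the $O(1)$ bound for $k \ge b$ captures the idea that $b$ points typically pin down an element of $\mathcal{F}$. The delicate points are the boundary $k=b$, where the two cases meet, and, in applications where $\ell$ may grow with $q$ (e.g.\ for flats, where $\ell = q^{m}$), verifying that the decay of the two contributions dominates the $\binom{\ell}{k}$ factors uniformly in $k$.
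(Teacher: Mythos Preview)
Your argument is correct and follows essentially the same route as the paper: the paper does not prove \cref{cf-chengreenhill} directly but simply observes that its hypotheses imply those of \cref{thm:revision} (the degree bound $O(|\mathcal{F}|^{1-k/\ell})$ for bounded $\ell$), which is in turn established by exactly the second-moment computation you carry out, packaged as \cref{th:2MomentThreshold} and \cref{new}. Your direct expansion of $\mathbb{E}[Z^{2}]$ by intersection size and the two-regime bound on $M_{k}$ is the same argument, merely specialized to the Chen--Greenhill hypothesis rather than the abstract degree condition; your caveat about unbounded $\ell$ is also apt, since the paper's derivation via \cref{thm:revision} explicitly assumes $\ell=O(1)$.
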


As consequences, they obtained threshold functions for several precise geometric structures;
for examples, non-trivial $k$-term arithmetic progressions, non-trivial parallelograms, and non-trivial right angles. We refer the interested reader to \cite{chen} for more details. 

Our first result strengthens \cref{cf-chengreenhill}.
Although we are primarily interested in finite vector spaces, \cref{new} applies more generally.

Let $U$ be a (finite) set and $X$ be the random subset  of $U$ such that each element of $U$ is in $X$ with probability $\th$, independently of all other elements of $U$. 
For an integer $\ell$ with $1\leq \ell < |U|$ 
and a family $\f $ of $\ell$-subsets of $U$, we are interested in the event $\G=\G(\f;\th)$ that there is a subset of $X$ belonging to $\f$. 

\begin{restatable}{prop}{new}
\label{new}
Let $U$ be a finite set, $\ell$ an integer with $1 \leq \ell < |U|$, and $\f$ a family of $\ell$-subsets of $U$, with $|\f|$ and $|U|$ tending to infinity.
If  there exists a constant $c>0$ satisfying  
\begin{equation}\label{new-condition}
\sum_{k=1}^{\ell-1}c^{k}|\f|^{k/\ell}|I_{k}|~ \leq ~|\f|^{2}, 
\end{equation}
where $I_{k}=I_{k}(\f):=\{(S,T)\in \f \times \f:|S \cap T|=k \}$, then the event $\G=\G(\f;\th)$ has a threshold function $|\f|^{-1/\ell}$ 
as $|\f|\to \infty$.
\end{restatable}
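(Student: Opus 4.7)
The plan is a textbook first-moment / second-moment argument applied to the count
\[
Y \defeq \big|\{\,S\in\f\,:\,S\subseteq X\,\}\big|,
\]
so that $\G = \{Y\ge 1\}$. Since each $S\in\f$ has size $\ell$ and elements are included in $X$ independently with probability $\th$, we have $\ee[Y] = |\f|\,\th^{\ell}$. The \emph{$0$-statement} is immediate from Markov's inequality: if $\th = o(|\f|^{-1/\ell})$, then $\pr[\G] \le \ee[Y] = |\f|\,\th^{\ell}\to 0$, and this direction uses neither the constant $c$ nor hypothesis \eqref{new-condition}.

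For the \emph{$1$-statement} I would use the second-moment bound $\pr[Y=0] \le \va(Y)/\ee[Y]^{2}$. Expanding along the partition $\f\times\f = \bigsqcup_{k=0}^{\ell} I_{k}$,
\[
\ee[Y^{2}] \;=\; \sum_{(S,T)\in\f\times\f} \th^{|S\cup T|} \;=\; \sum_{k=0}^{\ell} |I_{k}|\,\th^{2\ell-k}.
\]
The $k=0$ contribution is bounded by $|\f|^{2}\th^{2\ell} = \ee[Y]^{2}$ and is absorbed by the variance subtraction, while the diagonal $I_{\ell}$ has size $|\f|$ and contributes $|\f|\,\th^{\ell}$. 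Dividing by $\ee[Y]^{2} = |\f|^{2}\th^{2\ell}$,
\[
\frac{\va(Y)}{\ee[Y]^{2}} \;\le\; \frac{1}{|\f|\,\th^{\ell}} \;+\; \sum_{k=1}^{\ell-1}\frac{|I_{k}|}{|\f|^{2}\,\th^{k}}.
\]
The first summand vanishes since $\th = \omega(|\f|^{-1/\ell})$ forces $|\f|\,\th^{\ell}\to\infty$.

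The only step with any content is using hypothesis \eqref{new-condition} to control the remaining sum, and this is where I expect all the work to go. Dividing \eqref{new-condition} by $|\f|^{2}$ rewrites it as $\sum_{k=1}^{\ell-1} c^{k}|\f|^{k/\ell}|I_{k}|/|\f|^{2} \le 1$. Setting $M \defeq c\,\th\,|\f|^{1/\ell}$, which tends to infinity under the assumption on $\th$ and is therefore eventually $\ge 1$ (so $M^{k}\ge M$ for each $k\ge 1$), one obtains
\[
\sum_{k=1}^{\ell-1}\frac{|I_{k}|}{|\f|^{2}\,\th^{k}} \;=\; \sum_{k=1}^{\ell-1}\frac{c^{k}|\f|^{k/\ell}|I_{k}|}{|\f|^{2}\,M^{k}} \;\le\; \frac{1}{M}\sum_{k=1}^{\ell-1}\frac{c^{k}|\f|^{k/\ell}|I_{k}|}{|\f|^{2}} \;\le\; \frac{1}{M} \;\to\; 0.
\]
Hence $\pr[Y=0]\to 0$, completing the $1$-statement. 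The only subtlety is recognizing that the asymmetric weights $c^{k}|\f|^{k/\ell}$ in \eqref{new-condition} are calibrated precisely so that, once $\th$ crosses the threshold $|\f|^{-1/\ell}$, they absorb the factor $\th^{-k}$ in each variance summand uniformly in $k$; there is no deeper technical obstacle beyond this bookkeeping.
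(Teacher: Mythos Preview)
Your proof is correct and follows essentially the same second-moment argument as the paper. The only organizational difference is that the paper first isolates the second-moment computation as a separate proposition (\cref{th:2MomentThreshold}) and then derives \cref{new} from it, whereas you carry out the whole argument in one pass; your parameter $M=c\,\th\,|\f|^{1/\ell}$ is just the reciprocal (up to the factor $c$) of the paper's $t$ defined by $\th=t^{-1}|\f|^{-1/\ell}$, and your inequality $M^{k}\ge M$ plays the same role as the paper's $t^{k-1}\le c_1^{k-1}$.
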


When $\ell$ is bounded, we have the same conclusion of \cref{new} with a simpler condition.
\begin{restatable}{prop}{revision}
\label{thm:revision}
Let $\ell=O(1)$. Assume that for any subset $R$ of $U$ with $|R|=k$ for each $1 \leq k \leq \ell-1$, the number of elements of $\f$ which contain $R$ is $O(|\f|^{1-k/\ell})$. Then the event $\G=\G(\f;\th)$ has a threshold function $|\f|^{-1/\ell}$ as $|\f|\to \infty$.
\end{restatable}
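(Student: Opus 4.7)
The plan is to derive \cref{thm:revision} as a direct consequence of \cref{new} by verifying condition~\eqref{new-condition} for a suitably small constant $c$. Given the pointwise degree bound in the hypothesis, the only genuinely new work is an elementary double-count estimating $|I_k|$.

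Write $d(R) := |\{S \in \f : R \subseteq S\}|$ for a $k$-subset $R \subseteq U$, so the hypothesis reads $d(R) \le c_k |\f|^{1-k/\ell}$ for some constant $c_k$ depending on $k$ (and on $\ell$, which is bounded). For any pair $(S,T) \in I_k$ the intersection $R = S \cap T$ is a $k$-subset contained in both $S$ and $T$, so indexing pairs by $R$ gives
\[
|I_k| \;\le\; \sum_{|R|=k} d(R)^2 \;\le\; \Bigl(\max_{|R|=k} d(R)\Bigr) \cdot \sum_{|R|=k} d(R).
\]
Switching the order of summation yields $\sum_{|R|=k} d(R) = \sum_{S \in \f} \binom{\ell}{k} = \binom{\ell}{k} |\f|$, and combining this with the hypothesis produces
\[
|I_k| \;\le\; c_k \binom{\ell}{k} |\f|^{2 - k/\ell} \;=:\; C_k\, |\f|^{2 - k/\ell},
\]
where each $C_k$ is a constant because $\ell = O(1)$.

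Substituting this bound into the left-hand side of~\eqref{new-condition} gives
\[
\sum_{k=1}^{\ell-1} c^{k}\, |\f|^{k/\ell}\, |I_k| \;\le\; |\f|^{2} \sum_{k=1}^{\ell-1} c^{k} C_k.
\]
Since $\ell$ is bounded and the $C_k$ are fixed constants, I may fix $c > 0$ small enough that $\sum_{k=1}^{\ell-1} c^{k} C_k \le 1$. This verifies~\eqref{new-condition}, and \cref{new} then delivers the threshold $|\f|^{-1/\ell}$ for $\G(\f;\th)$.

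I do not anticipate any serious obstacle: \cref{new} carries all of the probabilistic content, the degree hypothesis converts directly into a bound on $|I_k|$ via a standard double-count, and the bounded range of $k$ (since $\ell = O(1)$) lets a single small constant $c$ handle all terms simultaneously. The only point requiring a little care is that the $O$-constants $c_k$ may depend on $k$, but finitely many such constants cause no trouble.
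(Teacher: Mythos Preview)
Your proposal is correct and follows essentially the same approach as the paper: both reduce to \cref{new} by bounding $|I_k|$ via the double-count $\sum_{|R|=k} d(R)^2 = \sum_{S\in\f}\sum_{R\subset S,\,|R|=k} d(R) \le \alpha\binom{\ell}{k}|\f|^{2-k/\ell}$, then choose $c$ small enough to make the sum in \eqref{new-condition} at most $|\f|^2$. The only cosmetic difference is that the paper writes down an explicit choice $c=\ell^{-1}\ln(1+\alpha^{-1})$ and uses the binomial theorem to verify it, whereas you (equivalently, since $\ell=O(1)$) simply invoke the finiteness of the sum.
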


\cref{thm:revision} has appeared in the literature before, for example see \cite[Lemma 4.1]{CITE}. It is not hard to see that \cref{cf-chengreenhill} follows from \cref{thm:revision}, as the hypothesis of the \cref{thm:revision} is weaker.

The hypothesis of \cref{new} is not needed to show that $\G$ has a threshold function $t \geq |\f|^{-1/\ell}$.
Indeed, if $\th = o(|\f|^{-1/\ell})$, then the expected number of elements of $\f$ contained in $X$ is $o(1)$.
Hence, by Markov's inequality, $\Pr[\Gamma] = o(1)$.
On the other hand, without the hypothesis of \cref{new}, it is not always the case that $|\f|^{-1/\ell}$ is a threshold function.
For example, if $\f$ is the family of sets of size $\ell$ that contain a fixed element $x \in U$, then $\Pr[\Gamma] < \Pr[x \in X] = \theta$.
Hence, $\Pr[\G] > 1-o(1)$ only if $\theta > 1-o(1)$, and in particular $|\f|^{-1/\ell}$ is not a threshold function.

Recently, Frankston, Kahn, Narayanan, and Park \cite{CITE} proved that a more sophisticated unconditional lower bound on the threshold function is always tight, up to a logarithmic factor.
Even though it is simpler and less general than the bound of Frankston et. al., \cref{new} is well-suited to our applications.

For some applications, we need a variant of \cref{new} (see \cref{th:2MomentThreshold}) that can sometimes give sharper tail bounds when $\ell$ is large.

Our main results are thresholds for several geometric properties related to incidences between points and flats ({\em i.e.} affine subspaces) that have previously been studied from an extremal perspective.

An {\em $m$-flat} is a translate of an $m$-dimensional linear subspace.
An $m$-flat $L$ is {\em $\ell$-rich with regard to a set $X$} if $|L \cap X| \geq \ell$.
It is known that, for any $\epsilon > 0$ and $q \rightarrow \infty$, if $X \subset \FF_q^n$ with $|X| = (1+\epsilon)(\ell-1)q^{n-m}$, then a constant proportion of all $m$-flats are $\ell$-rich \cite{ls}.
We give threshold functions for event that there is at least one $\ell$-rich $m$-flat in a random set of points in $\FF_q^n$, under various assumptions on $m,n,q$ and $\ell$.

\cref{new} applies directly to give the following threshold function for $\ell$-rich lines.

\begin{restatable}{thm}{lcol}
\label{l-col}
Suppose $n\geq 2$ is an integer and  $q$ is a prime power with   $n+q \ra \infty$.
 Let $X$ be  the random subset 
of  $\mathbb{F}_{\! q}^{n} $ 
such that each element of $\mathbb{F}_{q}^{n}$ is
 in $X$ with probability $\th$, independently of all other elements. 
 Then, for an integer $\ell$ satisfying $3 \leq  \ell \leq q $ 
 and  $\ell= O(n \ln q)$, 
 the event   $\G=\G(\th;\ell)$ that there is an $\ell$-rich line with respect to $X$ has a threshold function $(q^{(2n-2)} {q \choose \ell})^{-1/\ell}$. 
\end{restatable}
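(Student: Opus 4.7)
The plan is to apply \cref{new} with $U = \mathbb{F}_q^n$ and $\f$ the family of $\ell$-subsets of $\mathbb{F}_q^n$ whose points are collinear. With this choice, $\Gamma$ is exactly the event that $X$ contains a member of $\f$. Since $\mathbb{F}_q^n$ contains $L = q^n(q^n-1)/(q(q-1)) = \Theta(q^{2n-2})$ lines, each with exactly $q$ points, we have $|\f| = L\binom{q}{\ell}$; hence the threshold $|\f|^{-1/\ell}$ promised by \cref{new} equals $(q^{2n-2}\binom{q}{\ell})^{-1/\ell}$ up to constants.

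The heart of the proof is to verify the hypothesis \eqref{new-condition}. Two distinct points in $\mathbb{F}_q^n$ determine a unique line, so for $k \geq 2$, any pair $(S,T) \in \f \times \f$ with $|S \cap T| = k$ must have $S$ and $T$ on a common line, giving
\begin{equation*}
|I_k| = L\binom{q}{\ell}\binom{\ell}{k}\binom{q-\ell}{\ell-k}.
\end{equation*}
For $k = 1$, we split $|I_1|$ into the same-line contribution $L\binom{q}{\ell}\ell\binom{q-\ell}{\ell-1}$ and the different-line contribution $q^n d(d-1)\binom{q-1}{\ell-1}^2$, obtained by choosing a common point ($q^n$ options), an ordered pair of distinct lines through it ($d(d-1)$ options with $d = (q^n-1)/(q-1)$), and an $\ell$-subset of each line containing the point ($\binom{q-1}{\ell-1}^2$ options).

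Bounding $\Sigma := \sum_{k=1}^{\ell-1}c^k|\f|^{k/\ell}|I_k|$ by $|\f|^2$ then reduces to routine but somewhat delicate binomial estimates. Set $\theta_0 := |\f|^{-1/\ell}$. The different-line piece at $k = 1$ contributes a ratio of order $\ell q^{1-n+(2n-2)/\ell}$ to $|\f|^2$ (using $d \sim q^{n-1}$, $\binom{q-1}{\ell-1}=(\ell/q)\binom{q}{\ell}$, and $\binom{q}{\ell}^{1/\ell}\leq eq/\ell$); this is $o(1)$ under the hypotheses $\ell \geq 3$ and $n + q \to \infty$. The same-line contributions at each $k$ equal $c^k\binom{\ell}{k}\binom{q-\ell}{\ell-k}\theta_0^{\ell-k}$, which we bound via the entropy estimates $\binom{\ell}{k}\leq(e\ell/k)^k$ and $\binom{q-\ell}{\ell-k}\leq(eq/(\ell-k))^{\ell-k}$ together with $\theta_0\leq O(\ell\, q^{-1-(2n-2)/\ell})$; the resulting sum is $O(1)$ provided $c$ is chosen small enough.

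The main obstacle is obtaining sufficiently tight bounds uniformly over the wide parameter range $3 \leq \ell \leq q$ with $\ell = O(n\log q)$. Naive entrywise estimates are adequate for bounded $\ell$ but become too lossy as $\ell$ grows. A cleaner route uses the generating-function identity $\sum_{k=0}^\ell \binom{\ell}{k}\binom{q-\ell}{\ell-k}x^k = [y^\ell](1+xy)^\ell(1+y)^{q-\ell}$, specialized at $x = c/\theta_0$, followed by a saddle-point evaluation at a judiciously chosen $y_0 > 0$.
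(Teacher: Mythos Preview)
Your plan is essentially the paper's proof: apply \cref{new} to the family of collinear $\ell$-subsets, compute $|I_k|$ exactly as you do (same-line contribution $|\f|\binom{\ell}{k}\binom{q-\ell}{\ell-k}$ for all $k$, plus the cross-line term at $k=1$), and verify \eqref{new-condition}. The setup and the $|I_k|$ counts match the paper line for line.

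The execution diverges only in how the same-line sum is bounded. The paper does not use your generating-function/saddle-point route. Instead, for unbounded $\ell$ it replaces $\binom{q-\ell}{\ell-k}$ by the larger $\binom{q-k}{\ell-k}$, uses the elementary estimate
\[
\Big(\frac{q}{\ell}\Big)^k\binom{q-k}{\ell-k}\le c_1 e^k\binom{q}{\ell},
\]
and then sums via the binomial theorem to get $\sum_{k\ge 2}c^k|\f|^{k/\ell}|I_k|\le c_1\binom{q}{\ell}\bigl(1+ce^2 q^{(2n-2)/\ell}\bigr)^\ell$. A short case split on whether $q^{(2n-2)/\ell}\le 3$ finishes it; the hypothesis $\ell=O(n\ln q)$ enters precisely here, as it is what allows a \emph{constant} $c$ with $\ell\ln(1+3ce^2)\le (2n-2)\ln q$. (For bounded $\ell$ the paper bypasses all of this and just invokes \cref{thm:revision}.) Your saddle-point alternative should also work, but you have not actually carried it out, and the choice of $y_0$ that makes it go through is not indicated; this is the one place where your write-up is a sketch rather than a proof.

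One small correction: your claim that the cross-line $k=1$ ratio $\ell q^{1-n+(2n-2)/\ell}$ is $o(1)$ ``under the hypotheses $\ell\ge 3$ and $n+q\to\infty$'' is not true without also invoking $\ell=O(n\ln q)$ (try $n=2$, $\ell=q$). With the full hypothesis it is indeed $o(1)$; the paper is content with the weaker bound $O(1)$ here and absorbs the constant into the choice of $c$.
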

Note that Chen and Greenhill's \cref{cf-chengreenhill} immediately implies the case $\ell=3$ of \cref{l-col}, but the hypothesis of \cref{cf-chengreenhill} is not satisfied for $\ell \geq 4$.
\cref{new} does not yield \cref{l-col} for $\ell = \omega(n \log q)$, but for this case a direct application of a Chernoff bound yields the following result.

\begin{restatable}{thm}{lrichsharp}\label{lrichsharp}
Suppose $n,m$  are integers with $1 \leq m < n$, and $q$ is a prime power, with $n+q \rightarrow \infty$.
Let $X \subseteq \mathbb{F}_q^n$ such that each element of $\mathbb{F}_q^n$ is in $X$ with probability $\th$, independently of all other elements.
Then, for an integer $\ell \leq q^m$ with $\ell = \omega((n-m)m \log(q))$, the event $\Gamma= \Gamma(\th;\ell)$ that there is an $\ell$-rich $m$-flat with respect to $X$ has a sharp threshold $\ell q^{-m}$.
\end{restatable}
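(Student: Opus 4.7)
My plan is to prove each direction of the sharp threshold by a direct application of Chernoff-type tail bounds to the binomial variables $|X \cap L| \sim \mathrm{Bin}(q^m,\theta)$, where $L$ ranges over $m$-flats in $\mathbb{F}_q^n$. The hypothesis $\ell = \omega((n-m)m\log q)$ is precisely what is needed so that the Chernoff exponent beats both the trivial requirement $\ell\to\infty$ (for the $1$-statement) and the union-bound factor of $\binom{n}{m}_q\, q^{n-m}$ over all $m$-flats (for the $0$-statement).

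For the $1$-statement, when $\theta \geq (1+\epsilon)\ell q^{-m}$, I would fix a single $m$-flat $L_0$ (for instance the linear subspace spanned by the first $m$ standard basis vectors). Then $|X\cap L_0| \sim \mathrm{Bin}(q^m,\theta)$ has mean $\mu \geq (1+\epsilon)\ell$, and the Chernoff lower-tail inequality gives
\[
\Pr\bigl[|X\cap L_0| < \ell\bigr] \;\leq\; \exp\!\Bigl(-\tfrac{\epsilon^{2}\ell}{2(1+\epsilon)}\Bigr).
\]
Since $(n-m)m\log q$ is bounded below by a positive constant, the hypothesis forces $\ell \to \infty$, so the right-hand side is $o(1)$ and $\Pr[\Gamma]\to 1$.

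For the $0$-statement, when $\theta \leq (1-\epsilon)\ell q^{-m}$, I would take a union bound over all $m$-flats. Using $\prod_{j\geq 1}(1-q^{-j})^{-1} = O(1)$ uniformly in $q\geq 2$, one gets $\binom{n}{m}_q\, q^{n-m} \leq C q^{(m+1)(n-m)}$ for some absolute constant $C$. For each flat $L$, Chernoff's upper tail gives $\Pr[|X\cap L|\geq \ell] \leq \exp(-c(\epsilon)\ell)$ for some $c(\epsilon)>0$, and therefore
\[
\Pr[\Gamma] \;\leq\; C\exp\!\bigl((m+1)(n-m)\log q \;-\; c(\epsilon)\ell\bigr).
\]
Using $m+1 \leq 2m$ for $m \geq 1$, the hypothesis $\ell = \omega((n-m)m\log q)$ drives the exponent to $-\infty$, so $\Pr[\Gamma]\to 0$.

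There is no substantial obstacle here; the argument reduces to two Chernoff estimates once one observes that a \emph{single} flat suffices for the $1$-statement. The only point demanding a little care is which form of Chernoff's upper tail to invoke: the convenient $\exp(-\delta^2\mu/3)$ bound requires $\delta\leq 1$, which corresponds to $\epsilon \leq 1/2$ after the reparametrisation $\delta = \epsilon/(1-\epsilon)$; for larger $\epsilon$ one should instead use the sharper relative-entropy bound $\exp(-\mu[(1+\delta)\log(1+\delta) - \delta])$, or split into the two regimes $\epsilon\leq 1/2$ and $\epsilon > 1/2$ separately. In either case one obtains a strictly positive constant $c(\epsilon)$, and the calculation closes.
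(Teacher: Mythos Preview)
Your proposal is correct and follows essentially the same approach as the paper: the $1$-statement is obtained from a Chernoff lower-tail bound on a single fixed $m$-flat, and the $0$-statement from a Chernoff upper-tail bound combined with a union bound (equivalently, Markov on the count of $\ell$-rich $m$-flats) over the $O(q^{(m+1)(n-m)})$ flats. The only cosmetic difference is that the paper uses the inequality $\ln(1+\epsilon)\geq 2\epsilon/(2+\epsilon)$ to obtain a single Chernoff upper-tail bound valid for all $\epsilon>0$, so no case split on the size of $\epsilon$ is needed.
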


Note that the threshold $\ell q^{-m}$ of \cref{lrichsharp} matches the threshold $(q^{2n-2} \binom{q}{\ell})^{-1/\ell}$ of \cref{l-col} up to a constant factor for $\ell = \Theta(n \log q)$.

\cref{new} and \cref{thm:revision} do not apply directly to give a threshold function for $\ell$-rich $m$-flats when $m \geq 2$.
For example, suppose that $n=3$, $m=2$ and $\ell = 4$, and let $\f$ be the family of sets of $4$ coplanar points in $\FF_q^3$.
Let $R$ be three collinear points.
Then, the number of elements of $\f$ that contain $R$ is $q^3-3 = \omega(q^{11/4})$, and hence the hypothesis of \cref{thm:revision} is not satisfied.
Similar considerations hold for \cref{new}.

Although a direct application of \cref{new} does not apply for $m \geq 2$, we do manage to generalize \cref{l-col} by a more complicated argument.
In particular, \cref{new} applies to give a threshold for a large sub-family of the family of $\ell$-sets contained in $m$-flats.
A separate argument derives the following tight threshold result from the threshold for the sub-family.

\begin{restatable}{thm}{mflat}
\label{m-flat}
Suppose $n \geq 3$ and $q$ is a prime power with $q \to \infty$ \textup{(}$n$ could be bounded or not\textup{)}. Let $X$ be the random subset of $\mathbb{F}_{q}^{n}$ such that each element of $\mathbb{F}_{q}^{n}$ is in $X$ with probability $\theta$, independently of all other elements. Let $\ell$ and $m$ be bounded with $m \ge 2$, and let $\f$ be the family of $\ell$-rich $m$-flats. Then the event $\G=\G(\theta; \ell)$ that $X$ contains an $\ell$-rich $m$-flat in $\mathbb{F}_{q}^{n}$ has a threshold function $(q^{(n-m)(m+1)+m\ell})^{-1/\ell}$.
\end{restatable}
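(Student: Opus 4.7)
Write $N \defeq (n-m)(m+1) + m\ell$, so the proposed threshold reads $q^{-N/\ell}$. The proof splits into a first-moment lower bound and an application of \cref{thm:revision} to a carefully chosen sub-family of $\ell$-subsets lying in some $m$-flat. We may assume $\ell \geq m+1$, since otherwise any $\ell$-subset automatically lies in some $m$-flat and $\G$ degenerates to $|X|\geq\ell$. For the lower bound let $Y$ count pairs $(L, S)$, where $L$ is an $m$-flat of $\FF_q^n$ and $S \subseteq L \cap X$ has $|S|=\ell$. Any $\ell$-rich $m$-flat contributes at least one such pair, so $\ind_\Gamma \leq Y$ and
\[
\Pr[\G] \;\leq\; \EE[Y] \;=\; \Theta\bigl(q^{(n-m)(m+1)}\bigr) \binom{q^m}{\ell} \theta^\ell \;=\; \Theta(q^N \theta^\ell),
\]
which is $o(1)$ when $\theta = o(q^{-N/\ell})$.

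For the upper bound I would apply \cref{thm:revision} to
\[
\f'' \defeq \bigl\{\,S \subset \FF_q^n : |S|=\ell,\ S \subseteq L \text{ for some $m$-flat } L,\ \text{no $m+1$ points of $S$ lie in an $(m-1)$-flat}\,\bigr\}.
\]
Iterating the defining condition, every $S \in \f''$ sits in strong general position: no $j$ points of $S$ lie in a $(j-2)$-flat for any $3 \leq j \leq m+1$, since such a configuration padded with $m+1-j$ further points of $S$ would put $m+1$ points in an $(m-1)$-flat. In particular every subset of $S$ of size $k \leq m+1$ spans a flat of dimension exactly $k-1$, and $S$ itself spans a unique $m$-flat. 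A greedy count (avoiding at most $O(q^{m-1})$ forbidden points at each step) shows that a fixed $m$-flat contains $\Theta(q^{m\ell})$ such $\ell$-subsets, so distinct $m$-flats contribute disjoint $\ell$-subsets and $|\f''| = \Theta(q^{(n-m)(m+1)} \cdot q^{m\ell}) = \Theta(q^N)$.

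The crucial step is to verify that any $k$-subset $R$ with $1 \leq k \leq \ell-1$ is contained in $O(|\f''|^{1-k/\ell})$ members of $\f''$. If $R$ extends to no $S \in \f''$ the count is zero; otherwise $R$ inherits the strong general position property and therefore spans a flat of dimension $d = \min(k-1, m)$. The number of $m$-flats containing $R$ is $\Theta(q^{(n-m)(m-d)})$, and each supports at most $\binom{q^m - k}{\ell - k} = O(q^{m(\ell-k)})$ extensions of $R$ to an $\ell$-subset, for a total of $O(q^{(n-m)(m-d) + m(\ell-k)})$. Comparing with $N(1-k/\ell) = (n-m)(m+1)(1-k/\ell) + m(\ell-k)$, the required inequality reduces to $m - d \leq (m+1)(1-k/\ell)$; this is trivial when $d = m$, and when $d = k-1$ it rearranges to $\ell \geq m+1$, which holds by assumption. \cref{thm:revision} therefore yields $\Pr[\exists S \in \f''\colon S \subseteq X] \to 1$ for $\theta = \omega(q^{-N/\ell})$, and since any such $S$ witnesses an $\ell$-rich $m$-flat, $\Pr[\G]$ has the same behaviour.

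The main difficulty is the choice of $\f''$: it must be dense enough that $|\f''| = \Theta(q^N)$ matches the first-moment lower bound, yet restrictive enough to exclude the low-dimensional degenerate configurations (such as the collinear triples that destroy the hypothesis when $m \geq 2$) which make the naive family of $\ell$-subsets contained in $m$-flats fail the hypothesis of \cref{thm:revision}. The strong general position property, automatically inherited by every subset of an $S \in \f''$, is exactly what makes the $R$-containment count uniform across all values of $k$.
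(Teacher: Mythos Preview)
Your proof is correct and follows the same overall strategy as the paper: both single out the sub-family of $\ell$-subsets in general position inside some $m$-flat (the paper calls it $\mathcal{G}$, you call it $\mathcal{F}''$), verify the hypothesis of \cref{thm:revision} for this sub-family via exactly the same flat-counting computation, and sandwich the threshold using the first-moment bound on the full family.

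The one genuine difference lies in how the estimate $|\mathcal{F}''| = \Theta(q^{N})$ is obtained. The paper proves the sharper statement $|\mathcal{G}| = (1 - o(1))|\mathcal{F}|$ by \emph{induction on $m$}: it invokes \cref{th:mFlatMNModel}, which in turn appeals to the theorem itself for $(m-1)$-flats in $\FF_q^{m}$ (with \cref{l-col} serving as the base case $m=2$), to show that a uniformly random $\ell$-subset of an $m$-flat avoids an $(m+1)$-rich $(m-1)$-flat with probability $1-o(1)$. Your direct greedy count inside a fixed $m$-flat is more elementary and entirely self-contained; it avoids the induction and the coupling argument of \cref{th:mFlatMNModel} altogether. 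It only yields $|\mathcal{F}''| = \Theta(q^{N})$ rather than $(1-o(1))|\mathcal{F}|$, but since the sandwich $\Omega(|\mathcal{F}|^{-1/\ell}) = t_{\mathcal{F}} \leq t_{\mathcal{F}''} = O(|\mathcal{F}''|^{-1/\ell})$ only needs $|\mathcal{F}''| = \Omega(|\mathcal{F}|)$, nothing is lost. Your padding observation---that the single condition ``no $m+1$ points in an $(m-1)$-flat'' already forces strong general position when $\ell \geq m+1$---is what makes the greedy count and the containment bound for $R$ go through cleanly at every $k$, and is a nice simplification over the paper's presentation.
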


\cref{m-flat} generalizes \cref{l-col} for $m=O(1)$ under the extra conditions that $q \rightarrow \infty$ and $\ell = O(1)$.
We believe the same bound should hold even without these extra conditions, even for unbounded $m$.
We give a separate proof for the case that $n \rightarrow \infty$ and $\ell = q^m$.
Note that an $m$-flat $L$ is $q^m$-rich with respect to $X$ if and only if $L$ is completely contained in $X$.

\begin{restatable}{thm}{mplanes}\label{m-planes}
\label{m-planes}
Let $n$ be an integer with $n \rightarrow \infty$, and let $q$ be a prime power (bounded or not).
 Let $X$ be the random subset of $\F$ such that each element of $\F$ is in $X$ with probability $\th$, independently of all other elements.
 Let $m$ be bounded, and let $\f$ be the family of $m$-flats in $\F$. 
 Then, the event $\G=\G(\f;\th)$ that $X$ contains an $m$-flat has a threshold function $q^{-n(m+1)q^{-m}}$.
 If $q \rightarrow \infty$, then this threshold is sharp.
\end{restatable}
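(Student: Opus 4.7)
The plan is to apply \cref{new} with $\ell = q^m$ and $\f$ the family of $q^m$-point sets forming $m$-flats in $\F$. A standard count gives $|\f| = \Theta(q^{(n-m)(m+1)})$, so the threshold $|\f|^{-1/\ell} = q^{-(n-m)(m+1)/q^m}$ from \cref{new} matches the stated $q^{-n(m+1)q^{-m}}$ up to the constant factor $q^{m(m+1)/q^m} = \Theta(1)$. Since $\ell = q^m$ is unbounded when $q \to \infty$, the simpler \cref{thm:revision} does not apply.

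To verify condition \eqref{new-condition}, note that two distinct $m$-flats meet in a flat of dimension $j \in \{0, 1, \dots, m-1\}$ or are disjoint, so $|I_k| = 0$ unless $k \in \{0, 1, q, q^2, \dots, q^m\}$ and the sum has at most $m = O(1)$ non-zero terms. For $k = q^j$ with $0 \le j \le m-1$, I bound $|I_{q^j}|$ by choosing the first flat ($|\f|$ ways), then a $j$-flat inside it ($\Theta(q^{(m-j)(j+1)})$ ways), then a second $m$-flat through that $j$-flat ($\Theta(q^{(m-j)(n-m)})$ ways), giving $|I_{q^j}| = O(|\f| \cdot q^{(m-j)(n-m+j+1)})$. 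Substituting and dividing by $|\f|^2$, each such term of \eqref{new-condition} is $O(c^{q^j} q^{E_j})$ with
\[
E_j = -(n-m)\bigl[(j+1) - (m+1)q^{j-m}\bigr] + (m-j)(j+1).
\]
An elementary case check shows $(j+1) - (m+1)q^{j-m} \ge 0$ for every admissible $j$ with $q \ge 2, m \ge 1$, with strict inequality except at the single edge case $(m, q, j) = (1, 2, 0)$. In the strict cases $q^{E_j} \to 0$, while in the edge case the term is $O(c \cdot q^{O(1)})$; choosing $c$ sufficiently small makes the full sum at most $|\f|^2$, so \cref{new} yields the threshold claim.

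For the sharp threshold when $q \to \infty$, let $Y$ count the $m$-flats contained in $X$. At $\theta = (1 \pm \epsilon) t$ one has $\mathbb{E}[Y] = \Theta(q^{-m(m+1)}) (1 \pm \epsilon)^{q^m}$. Since $m$ is bounded and $q^m \to \infty$, the factor $(1 - \epsilon)^{q^m}$ decays doubly-exponentially and dominates the polynomial $q^{-m(m+1)}$, so $\mathbb{E}[Y] \to 0$ and Markov gives $\Pr[\G] \to 0$. Conversely, $(1 + \epsilon)^{q^m} \to \infty$ dominates in the upper regime, giving $\mathbb{E}[Y] \to \infty$; combining the $|I_{q^j}|$ bounds above with the standard estimate $\mathrm{Var}(Y) \le \mathbb{E}[Y] + \sum_{j=0}^{m-1} |I_{q^j}| \theta^{2q^m - q^j}$ yields $\mathrm{Var}(Y) = o(\mathbb{E}[Y]^2)$ (the earlier delicate edge case no longer causes trouble, since $q \to \infty$ makes every $(j+1) - (m+1)q^{j-m}$ strictly positive), so Chebyshev gives $\Pr[\G] \to 1$. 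The main obstacle throughout is the exponent bookkeeping behind \eqref{new-condition}, particularly the edge case $(1,2,0)$ where the $(n-m)$-coefficient in $E_j$ vanishes and the condition survives only because $c$ can be tuned small and the number of terms is $O(1)$.
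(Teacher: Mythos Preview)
Your proposal is correct and follows essentially the same route as the paper: both arguments rest on the second-moment method via the bound $|I_{q^j}| = O(q^{(n-j)(j+1)+2(n-m)(m-j)})$ and the key exponent inequality $(j+1) - (m+1)q^{j-m} \ge 0$, including the edge case $(m,q,j)=(1,2,0)$.

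The only organizational difference is that the paper first proves a stronger quantitative statement (\cref{th:strongM-plane}), which gives explicit tail rates $\Pr[\G] \to 0$ when $\theta < (t|\f|)^{-1/\ell}$ and $\Pr[\G] \to 1$ when $\theta > (t^{-1}|\f|)^{-1/\ell}$ for any $t \to \infty$, using \cref{th:2MomentThreshold}. Both \cref{m-planes} and the blocking-set threshold \cref{th:blockingSetThreshold} then drop out as short corollaries, with the sharp threshold obtained via the identity $(1\pm\epsilon) = ((1\pm\epsilon)^{-q^m})^{-q^{-m}}$ and the observation that $(1\pm\epsilon)^{\mp q^m} \to \infty$ when $q \to \infty$. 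Your approach instead applies \cref{new} directly for the coarse threshold and redoes the variance estimate by hand for the sharp threshold; this is slightly more self-contained for \cref{m-planes} alone, while the paper's detour through \cref{th:strongM-plane} buys reusability. One small clarification worth making explicit in your write-up: the bound $q^{E_j} \to 0$ in the strict cases holds uniformly because $\alpha_j := (j+1) - (m+1)q^{j-m}$ is increasing in $q$ and hence bounded below (over all $q \ge 2$ and non-edge $(m,j)$, with $m$ bounded) by a positive constant, so $E_j \le -(n-m)\alpha_{\min} + O(1) \to -\infty$ regardless of how $q$ varies with $n$.
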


\cref{m-planes} is most interesting when $q = o((n\log(n))^{1/m})$, since otherwise $q^{-n(m+1)q^{-m}}$ is bounded from below by a constant.
Chen and Greenhill proved \cref{m-planes} in the case that $q$ is bounded; our contribution is to prove the sharp threshold for the case that $q = \omega(1)$ and $q=o((n\log(n))^{1/m})$.

Next, we give a threshold function for the event that $X$ is a blocking set.
An {\em $m$-blocking set} in $\FF_q^n$ is a set of points that intersects every $m$-flat.
Several important questions in finite geometry amount to determining the largest or smallest possible size of a minimal blocking set.
For example, the celebrated cap set problem \cite{CLP, capset} asks for the largest size of a set in $\FF_3^n$ that does not contain a line.
Considering the complement of such a set, this is  equivalent to asking for the size of the smallest $1$-blocking set in $\FF_3^n$.
Blocking sets are also very closely related to Nikodym sets, which have been studied in the context of Kakeya sets; see \cite{lsw} for a detailed discussion.

More is known about $1$-blocking sets in $\FF_q^2$ than for other values of $m$ and $n$, although many open problems remain even in this case \cite{bszsz}.
An observation in this setting ($m=1,n=2$) is that it is relatively easy to construct minimal blocking sets of size $\Theta(q \log q)$, but larger and smaller minimal blocking sets are comparatively rare \cite{szgw}.
Here, we formalize this observation in higher dimensions by giving a threshold.

\begin{restatable}{thm}{blockingSetThreshold}
\label{th:blockingSetThreshold}
Let $m, n$ be integers with $1 \leq m < n/2$, and $q$ be a prime power, with $q$ tending to infinity and $m$ be a constant, and $n=o(q^m (\log q)^{-1})$.
Let $X$ be a set of points in $\FF_q^n$ such that each point of $\FF_q^n$ is in $X$ with probability $\th$, independently of all other points.
Then, the event that $X$ is an $m$-blocking set has a threshold $nq^{-m}\log(q)$.
\end{restatable}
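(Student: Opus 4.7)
The plan is to pass to the complement and apply a two-moment argument. Let $Y := \mathbb{F}_q^n \setminus X$, so each point lies in $Y$ independently with probability $1-\theta$, and $X$ is an $m$-blocking set iff no $m$-flat is contained in $Y$. Let $Z$ count the $m$-flats contained in $Y$. Gaussian-binomial estimates give $N := q^{n-m}\binom{n}{m}_q = \Theta\!\bigl(q^{(n-m)(m+1)}\bigr)$ for $m$ bounded (this is the total number of $m$-flats), so $\EE Z = N(1-\theta)^{q^m}$. The goal is $\Pr[Z=0]\to 1$ when $\theta = \omega(n q^{-m}\log q)$ and $\Pr[Z=0]\to 0$ when $\theta = o(n q^{-m}\log q)$.

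The upper direction follows from a first-moment bound. Using $\log(1-\theta)\leq -\theta$, $\log \EE Z \leq (n-m)(m+1)\log q - q^m \theta + O(1)$. Since $(n-m)(m+1)\log q = O(n\log q)$, this tends to $-\infty$ whenever $q^m \theta = \omega(n\log q)$, so $\EE Z \to 0$ and Markov gives $\Pr[Z=0]\to 1$.

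For the lower direction, a second-moment argument is needed. First, $\EE Z \to \infty$: the term $q^m\theta = o(n\log q)$ is dominated by $(n-m)(m+1)\log q$, which is $\Theta(n\log q)$ for unbounded $n$ and at least $(m+1)^2\log q$ when $n$ is bounded and $q\to\infty$. For the variance, expand
\[
\EE Z^2 = \sum_{F_1,F_2} (1-\theta)^{|F_1\cup F_2|}
\]
and group ordered pairs by $j = \dim(F_1\cap F_2) \in \{-1,0,\ldots,m\}$, with $j=-1$ for disjoint pairs. The diagonal ($j=m$) contributes $\EE Z$, and the disjoint pairs contribute at most $N^2(1-\theta)^{2q^m} = (\EE Z)^2$. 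For each $0\leq j\leq m-1$, the number $A_j$ of ordered pairs intersecting in a $j$-flat is at most (number of $j$-flats)$\,\times\,$(number of $m$-flats through a given $j$-flat)$^2 = \Theta\!\bigl(q^{(n-j)(j+1)+2(m-j)(n-m)}\bigr)$. Hence
\[
\frac{A_j(1-\theta)^{2q^m-q^j}}{(\EE Z)^2}\ \leq\ \frac{A_j}{N^2}(1-\theta)^{-q^j}\ \leq\ \Theta\!\bigl(q^{(j+1)(2m-n-j)}\bigr)\exp\!\bigl(q^j\theta(1+o(1))\bigr).
\]
Taking natural logs, the negative contribution $-(j+1)(n-2m+j)\ln q \leq -(n-2m)\ln q$ dominates the positive contribution $q^j \theta \leq n q^{j-m}\log q \leq n q^{-1}\log q$ under the hypotheses $n = o(q^m/\log q)$, $m=O(1)$, and $n > 2m$. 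Summing over the $O(1)$ values of $j$ yields $\EE Z^2 \leq (1+o(1))(\EE Z)^2$, and Paley--Zygmund gives $\Pr[Z\geq 1]\to 1$.

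The main obstacle is the second-moment step for pairs with large intersection (small codimension $m-j$), where the blow-up factor $(1-\theta)^{-q^j}$ is largest. This is where $n = o(q^m/\log q)$ is essential: it keeps $q^j\theta$ smaller than the savings $(j+1)(n-2m+j)\ln q$ coming from $A_j/N^2$, while $n > 2m$ guarantees those savings are nontrivial. Careful tracking of the $q$-binomial asymptotics across the bounded- and unbounded-$n$ regimes completes the proof.
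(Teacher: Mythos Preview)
Your proposal is correct and follows essentially the same route as the paper: pass to the complement $Y=X^c$, use a first-moment bound on the number $Z$ of $m$-flats contained in $Y$ for the upper range, and a second-moment bound grouped by $\dim(F_1\cap F_2)$ for the lower range, with the key estimate $A_j/N^2=\Theta\bigl(q^{(j+1)(2m-n-j)}\bigr)$. The paper packages the second-moment computation into its general Proposition~2.1 and an intermediate Theorem~4.1 (on the event that a random set contains an $m$-flat) before specializing to blocking sets, whereas you carry out the variance calculation directly; the underlying argument is the same.
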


The assumption $m<n/2$ arises in our proof from the fact that most pairs of $m$-flats intersect when $m \geq n/2$.
It would be very interesting to remove this assumption.

Next, we take $X$ to be a random set of points and $m$-flats, and give a threshold for the event that some point in $X$ is incident to an $m$-flat in $X$.
Let $P$ be a set of points and $H$ be a set of hyperplanes in $\mathbb{F}_q^n$, and denote by $I(P,H) = |\{(p,h) \in P \times H : p \in h\}|$ the number of incidences between $P$ and $H$.
It was first shown by Haemmers (see \cite{WW} or \cite{vinh2}) that
\[\left\vert I(P, H)-\frac{|P||H|}{q} \right\vert \ll q^{\frac{n-1}{2}}\sqrt{|P||H|}.\]
It follows that if $|P||H|\gg q^{n+1}$, then there always exists an incidence between $P$ and $H$. 

Building on Hoi's work \cite{hoi}, Vinh \cite{vinh1} proved a version for a random sets, which states that for any $\alpha>0$, there exists an integer $q_0=q(\alpha)$ and a positive number $C_{\alpha}$ with the following property. 
When a point set $P$ and a hyperplane set $H$ where $|P|=|H|=s\ge C_{\alpha}q$ are chosen randomly in $\mathbb{F}_q^d$, the probability that $I(P, H)=0$ is at most $\alpha^s$ provided that $q\ge q_0$.

In the next theorem, we provide a more general threshold variant of this result.

\begin{restatable}{thm}{incidenceThreshold}
\label{th:incidenceThreshold}
Let $m, n$ be integers with $1 \leq m < n$ and $q$ a prime power, with $n+q$ tending to infinity.
Let $X$ be a set of points in $\FF_q^n$ such that each point is in $X$ with probability $\th$ independently of all other points, and let $Y$ be a set of $m$-flats in $\FF_q^n$, such that each $m$-flat is in $Y$ with probability $\th$, independently of all other $m$-flats.
Let $\G$ be the event that there is a pair $(x,y) \in X \times Y$ such that $x$ is contained in $y$.
If $m \in \{1,n-1\}$, then the threshold for $\G$ is $q^{-(1/2)((n-m)(m+1) + m)}$.
Otherwise, the threshold for $\G$ is $q^{-n}$.
\end{restatable}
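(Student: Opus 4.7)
Let $I = |\{(x,y) \in X \times Y : x \in y\}|$ count incidences, and let $U$ be the set of all point--$m$-flat incidences in $\FF_q^n$. A standard count gives $|U| = \Theta(q^{(n-m)(m+1)+m})$, since each point lies on $\binom{n}{m}_q = \Theta(q^{m(n-m)})$ flats. By the independence of $X$ and $Y$, $\EE[I] = \theta^{2}|U|$. The lower bounds then follow from first moments: if $m \in \{1, n-1\}$ and $\theta = o(q^{-((n-m)(m+1)+m)/2})$, then $\EE[I] = o(1)$, so Markov gives $\Pr[\G] \le \EE[I] = o(1)$; if instead $m \in \{2, \ldots, n-2\}$ and $\theta = o(q^{-n})$, then $\EE[|X|] = \theta q^n = o(1)$, so $\Pr[X = \emptyset] = 1 - o(1)$, and hence $\Pr[\G] = o(1)$.

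\textbf{Upper bound, intermediate $m$.} For $m \in \{2, \ldots, n-2\}$, one checks the elementary inequality $m(n-m) \ge n$. Since each point lies on $\Theta(q^{m(n-m)})$ flats, any nonempty $X$ meets at least $\Omega(q^{m(n-m)})$ of them. Conditioning on $X$ and using independence of $Y$,
\[
\Pr[\G^{c}] \le \Pr[X = \emptyset] + (1-\theta)^{\Omega(q^{m(n-m)})} \le 2 \exp\!\bigl(-\Omega(\theta q^{n})\bigr),
\]
which is $o(1)$ whenever $\theta = \omega(q^{-n})$.

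\textbf{Upper bound, $m \in \{1, n-1\}$.} Here I would run the second-moment method on $I$. Writing $A_{12}, A_{21}$ for the numbers of ordered pairs of distinct incidences sharing a point and sharing a flat, respectively, a direct count gives $A_{12} = q^{n}\binom{n}{m}_q(\binom{n}{m}_q - 1)$ and $A_{21} = (\text{number of }m\text{-flats}) \cdot q^{m}(q^{m} - 1)$. Plugging in $m = 1$ or $m = n-1$, both satisfy $A_{12}, A_{21} = O(q^{-n}|U|^{2})$, so
\[
\frac{\mathrm{Var}(I)}{(\EE[I])^{2}} = O\!\left(\frac{1}{\theta^{2}|U|} + \frac{1}{\theta q^{n}}\right).
\]
Above the threshold $t = \Theta(|U|^{-1/2}) = \Theta(q^{-(2n-1)/2})$, which exceeds $q^{-n}$ by a factor $q^{1/2}$, both terms tend to $0$, and Chebyshev then gives $\Pr[I \ge 1] \to 1$.

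\textbf{Main obstacle.} The key technical step is the second-moment bound for $m \in \{1, n-1\}$. In fact $A_{12}/|U|^{2} = \Theta(q^{-n})$ for every $m$, so the second-moment method requires $\theta \gg q^{-n}$ to succeed. For $m \in \{1, n-1\}$ this is automatic above the natural threshold $|U|^{-1/2}$, but for intermediate $m$ the inequality $m(n-m) \ge n$ yields $|U|^{-1/2} \le q^{-n}$, so the second-moment method cannot succeed below $q^{-n}$; in that regime the emptiness-of-$X$ constraint becomes the true bottleneck, producing the dichotomy in the statement.
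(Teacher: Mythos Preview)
Your argument is correct. For $m\in\{1,n-1\}$ your direct second-moment computation on the incidence count $I$ is exactly the content of the paper's application of its Proposition~\ref{new} with $\ell=2$: the key inequality $|I_1|=O(|\f|^{3/2})$ there is equivalent to your bound $A_{12}+A_{21}=O(q^{-n}|U|^2)$ together with $|U|^{-1/2}=\Theta(q^{-n+1/2})\gg q^{-n}$. So on that half the two proofs coincide.

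For intermediate $m$ the approaches genuinely differ. The paper still runs a second-moment argument, now via Proposition~\ref{th:2MomentThreshold} with $c=q^n|\f|^{-1/2}$, verifying $c\to 0$ precisely when $(n-m)m>n$ and checking $|I_1|=o(|\f|^2 q^{-n})$; this yields $\Pr[\G]\to 1$ for $\theta=\omega(q^{-n})$. Your route is more direct: condition on $X\neq\emptyset$, pick any point of $X$, and use that it lies on $\Theta(q^{m(n-m)})\ge\Theta(q^n)$ flats, each in $Y$ independently with probability $\theta$. This sidesteps the second-moment machinery entirely and makes transparent why $q^{-n}$ is the threshold: it is the emptiness threshold for $X$, and once $X$ is nonempty the abundance of flats through a single point forces an incidence. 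Your argument is shorter and exposes the mechanism more clearly; the paper's has the advantage of being a uniform application of its general framework.
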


\section{Thresholds for general set families}

In this section, we prove \cref{new} and \cref{thm:revision}.
We first prove \cref{th:2MomentThreshold}, which gives somewhat better tail bounds in certain applications.
Then we derive \cref{new} as a corollary to \cref{th:2MomentThreshold}, and finally derive \cref{thm:revision} as a corollary to \cref{new}.

\begin{prop}\label{th:2MomentThreshold}
Let $U$ be a finite set with $1 \leq \ell \leq |U|$, and let $\mathcal{F}$ be a family of $\ell$-subsets of $U$. Let $X$ be  the random subset of  $U$ such that each element of $U$ is in $X$ with probability $\th$, independently of all other elements. For any $0<c,c'<1$ and $\theta = c^{-1}|\f|^{-1/\ell}$, if
\begin{equation}\label{eq:2condition}
\sum_{k=1}^{\ell-1} c^k |\f|^{k/\ell}|I_k| \leq c'|\f|^2\end{equation}
where $I_{k}=I_{k}(\f):=\{(S,T)\in \f \times \f:|S \cap T|=k \}$, then $X$ contains an element of $\mathcal{F}$ with probability at least $1 - c' - c^\ell$.
\end{prop}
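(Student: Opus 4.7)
The plan is a second-moment calculation on
\begin{align*}
Z \defeq \bigl|\{S \in \f : S \subseteq X\}\bigr|,
\end{align*}
so that $\G$ is the event $\{Z \geq 1\}$. First I would compute $\EE[Z] = |\f|\th^\ell = c^{-\ell}$, using $\th = c^{-1}|\f|^{-1/\ell}$. The scaling is calibrated so that $\EE[Z]$ is moderately large but still bounded in terms of $c$; in particular Markov alone is insufficient, and a variance estimate is required.

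For the second moment, I would stratify pairs $(S,T) \in \f \times \f$ by the overlap size $k = |S \cap T|$. Since $|S \cup T| = 2\ell - k$, the joint probability that both $S$ and $T$ lie in $X$ is $\th^{2\ell-k}$, giving
\begin{align*}
\EE[Z^2] = \sum_{k=0}^{\ell} |I_k|\th^{2\ell-k}.
\end{align*}
Subtracting $\EE[Z]^2 = |\f|^2\th^{2\ell}$ and using $|I_0| \leq |\f|^2$ discards the $k=0$ contribution (which is nonpositive), yielding
\begin{align*}
\va(Z) \leq \sum_{k=1}^{\ell} |I_k|\th^{2\ell-k}.
\end{align*}

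The main algebraic step is to split this into diagonal and off-diagonal parts under the chosen scaling. The diagonal $k = \ell$ contributes $|I_\ell|\th^\ell = |\f|\th^\ell = c^{-\ell}$. For $1 \leq k \leq \ell-1$, the identity
\begin{align*}
\th^{2\ell-k} = c^{-2\ell}|\f|^{-2} \cdot c^k|\f|^{k/\ell}
\end{align*}
turns the remaining sum into exactly $c^{-2\ell}|\f|^{-2}$ times the left-hand side of hypothesis \eqref{eq:2condition}, which bounds it by $c' c^{-2\ell}$. Combining, $\va(Z) \leq c'c^{-2\ell} + c^{-\ell}$ while $\EE[Z]^2 = c^{-2\ell}$.

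Chebyshev's inequality (in the form $\pr[Z = 0] \leq \va(Z)/\EE[Z]^2$) then gives $\pr[Z = 0] \leq c' + c^{\ell}$, so $\pr[\G] \geq 1 - c' - c^{\ell}$, as claimed. There is no serious obstacle: the proof is a short, calibrated second moment argument, and the content of the proposition is really in the normalization of \eqref{eq:2condition}, chosen so that the two sources of failure---off-diagonal overlap ($c'$) and diagonal ($c^\ell$)---decouple cleanly, which is precisely what allows \cref{new} and the sharper tail estimates needed for large $\ell$ to be extracted by a single choice of $c$.
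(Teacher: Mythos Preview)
Your proposal is correct and follows essentially the same second-moment argument as the paper: compute $\EE[Z]=c^{-\ell}$, stratify $\EE[Z^2]$ by the overlap $k$, bound the $k=0$ term by $\EE[Z]^2$, the $k=\ell$ term by $\EE[Z]$, and rewrite the middle terms via $\th^{2\ell-k}=c^{-2\ell}|\f|^{-2}\cdot c^k|\f|^{k/\ell}$ so that hypothesis \eqref{eq:2condition} bounds them by $c'\EE[Z]^2$; Chebyshev then gives $\Pr[Z=0]\leq c'+c^{\ell}$. The only cosmetic difference is that you phrase the bound directly in terms of $\va(Z)$ rather than $\EE[Z^2]-\EE[Z]^2$.
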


\begin{proof}
Let $Y=Y(\f)$ be the number of elements of $\f$ contained in $X$.
The expectation of $Y$ is 
\begin{equation}\label{expectation}
\mathbb{E}[Y]=\mathbb{E}\Big[\sum_{S \in \f}\mathbf{1}(S \subset X)\Big]=\sum_{S \in \f}\mathbb{E}[\mathbf{1}(S \subset X)]=\sum_{S \in \f} \Pr[S \subset X]=\th^{\ell}|\f|= c^{-\ell},
\end{equation}
and the second moment is
$$
\mathbb{E}[Y^{2}]=\mathbb{E}\left[\sum_{S \in \f}\sum_{T \in \f}\mathbf{1}(S \subset X)\mathbf{1}(T \subset X) \right]
=\sum_{k=0}^{\ell}\sum_{S,T \in \f \atop |S \cap T|=k}\textup{Pr}[S \cup T \subset X].
$$
Since $\textup{Pr}[S \cup T \subset X]=\th^{2\ell-k}$ for $|S\cap T|=k$, we have 
\begin{equation}\label{sm} \mathbb{E}[Y^{2}]=\sum_{k=0}^{\ell}\sum_{S,T \in \f \atop |S \cap T|=k}\th^{2\ell-k}=\sum_{S,T \in \f \atop |S \cap T|=0}\th^{2\ell}+
\sum_{S,T \in \f \atop |S \cap T|=\ell}\th^{\ell}
+\sum_{k=1}^{\ell-1}\th^{2\ell-k} |I_k|.
\end{equation}
Notice that  
$$ 
\sum_{S,T \in \f \atop |S \cap T|=0}\th^{2\ell} \leq \th^{2\ell}|\f|^2 =\ee[Y]^2, ~~~{\rm and} ~~~
\sum_{S,T \in \f \atop |S \cap T|=\ell}\th^{\ell} = \th^{\ell}|\f| =\ee[Y]. $$

We use the hypotheses of the theorem to bound the third term:
$$
\sum_{k=1}^{\ell-1} \th^{2\ell-k} |I_k| ~
\leq ~ \th^{2\ell} \sum_{k=1}^{\ell-1} c^{k} |\f|^{k/\ell}|I_k| ~\leq~ c'\th^{2\ell}|\f|^2 = c'\mathbb{E}[Y]^2 .$$

Combining, \cref{sm} simplifies to
\[ \mathbb{E}[Y^2] = (1+c')\mathbb{E}[Y]^2 + \mathbb{E}[Y].\]

Using Chebyshev's inequality (see Lemma 5.1.4 of \cite{MV}) and the bound \cref{expectation} on $\mathbb{E}[Y]$,
$$ 
\Pr [ Y=0] 
\leq \Pr \bb[ |Y-\ee[Y]|\geq \ee[Y]\bb]
\leq \frac{\ee[Y^2]-\mathbb{E}[Y]^2}{\mathbb{E}[Y]^{2}}\leq \frac{1}{\mathbb{E}[Y]}+c' \leq c^\ell + c'.$$
This completes the proof.
\end{proof}

\new*

\begin{proof}
We need to prove that $\Pr[\G] \to 0$ if $\th=o(|\f|^{-1/\ell})$, and $\Pr[\G] \to 1$ if $\th=\omega(|\f|^{-1/\ell})$.
Let $Y=Y(\f)$ be the number of subsets of $X$ that are elements of $\f$. Then $\G$ is the event that $Y\geq 1$.

First, suppose that $\th=o(|\f|^{-1/\ell})$. Then  
\[
\mathbb{E}[Y]=\mathbb{E}\Big[\sum_{S \in \f}\mathbf{1}(S \subset X)\Big]=\sum_{S \in \f}\mathbb{E}[\mathbf{1}(S \subset X)]=\sum_{S \in \f} \Pr[S \subset X]=\th^{\ell}|\f|.
\]
Markov's inequality and  $\th=o(|\f|^{-1/\ell})$  imply that 
$$
\textup{Pr}[\G] = \Pr[ Y \geq 1]\leq \mathbb{E}[Y] \ra 0 , $$
as desired. 

\mn 

Suppose now that $\th = \omega(|\f|^{-1/\ell})$, 
and that \cref{new-condition} holds for the constant $c_1$.
Define $t$ so that $\th= t^{-1} |\f|^{-1/\ell}$, and note that $t = o(1)$.
For $|\f|$ large enough, we have $t < c_1$ and hence
\[\sum_{k=1}^{\ell -1} t^k |\f|^{k/\ell} |I_k| = \left(\frac{t}{c_1} \right) \sum_{k=1}^{\ell - 1} c_1 t^{k-1} |\f|^{k/\ell} |I_k| \leq \left(\frac{t}{c_1} \right) \sum_{k=1}^{\ell - 1} c_1^k |\f|^{k/\ell} |I_k| \leq \left(\frac{t}{c_1} \right) |\f|^2.\]
Now \cref{th:2MomentThreshold} implies that $\Pr[\G] \geq 1 - tc_1^{-1} - t^\ell \ra 1$, as desired.
\end{proof}

\revision*

\begin{proof}
It suffices to show that the assumption implies \cref{new-condition}. Notice that $I_{k}=I_{k}(\f)$ satisfies
\begin{equation}
|I_{k}|
= \sum_{S \in \f } \sum_{T\in \f} \mathbf{1} (|S\cap T|=k)
\leq  \sum_{S \in \f } \sum_{R\subset S \atop |R|=k} \sum_{T\in \f }\mathbf{1} (R\subset T).
\end{equation}
Since the hypothesis means  $\sum_{T\in \f }\mathbf{1} (R\subset T)= O(|\f|^{1-k/\ell})$, there is a constant $\alpha >0$ such that 
$$ |I_{k}|
\leq \ga \sum_{S \in \f } {\ell \choose k} |\f|^{1-k/\ell}
=  \ga {\ell \choose k} \, |\f|^{2-k/\ell}. $$
Thus
$$
|\f|^{k/\ell}|I_{k}|
\leq  \ga  \, {\ell \choose k}|\f|^{2},
$$
which yields, for $ c:= \frac{ \ln (1+\ga^{-1})}{\ell}$, 
\[\sum_{k=1}^{\ell-1}c^{k}|\f|^{k/\ell}|I_{k}| 
\leq 
\ga 
\sum_{k=1}^{\ell-1}
{\ell \choose k} c^k |\f|^{2}
= \ga 
\bb((1+c)^\ell -1-c^{\ell} \bb)|\f|^{2} \leq \ga 
(e^{c\ell} -1 )|\f|^{2} 
=|\f|^{2} 
. 
\]
This completes the proof.
\end{proof}

\section{Rich flats}

In this section, we prove \cref{l-col}, \cref{lrichsharp}, and \cref{m-flat}.
We use the following elementary facts about the number of $m$-flats in $\FF_q^n$.

An $m$-flat is a translate of an $m$-dimensional linear subspace.
The number of $m$-dimensional linear subspaces in $\F$ is
\[\binom{n}{m}_q \defeq \frac{(q^n-1)(q^n-q)\ldots(q^n-q^{m-1})}{(q^m-1)(q^{m}-q)\cdots(q^m-q^{m-1})}. \]
Indeed, the number of $m$-tuples of linearly independent vectors in $\F$ is $(q^n-1)(q^n-q)\ldots(q^n-q^{m-1})$, and the number of $m$-dimensional subspaces is the number of $m$-tuples of linearly independent vectors in $\F$ divided by the number of linearly independent vectors in $\mathbb{F}_q^m$.
The number of $m$-flats is $q^{n-m} \binom{n}{m}_q$, since there are $q^{n-m}$ distinct translates of a fixed $m$-dimensional subspace.
Similarly, for $0 \leq d < m$, the number of $m$-flats that contain a fixed $d$-flat is $\binom{n-d}{m-d}_{q}$.

For $q \rightarrow \infty$, we have the estimate $\binom{n}{m}_q = (1+o(1))q^{(n-m)m}$.
For general $q \geq 2$, we have the bounds 
\begin{equation}\label{eq:qBinBound}
q^{(n-m)m} \leq \binom{n}{m}_q < 4q^{(n-m)m}.\end{equation}

\lcol*
 
\begin{proof}
Let $\f$ be the family of all $\ell$-rich lines in $\mathbb{F}_{q}^{n}$. Since the number of lines in $\mathbb{F}_{\! q}^{n}$ is $\binom{q^{n}}{2}/\binom{q}{2}$ and each line contains ${q \choose \ell }$ sets of $\ell$ points, we have that 
\beq{ff}
|\f|= 
\frac{q^{n-1}(q^n-1)}{q-1}
\binom{q}{\ell}
=
\frac{(1-q^{-n})q^{2n-2}}{1-q^{-1}}
\binom{q}{\ell},
~~~~\mbox{in particular} ~~~~ 
|\f| = \Theta \bb(  q^{(2n-2)}
{ q \choose \ell} 
\bb) . 
\eeq

If $\ell$ is bounded, then $|\f| =\Theta( q^{2n+\ell -2})$ and we may apply \cref{thm:revision}.
Notice that  
the number of lines containing a fixed point is $(q^{n}-1)/(q-1)$, and hence the number of collinear $\ell$-sets containing a fixed point is  
$$\frac{q^{n}-1}{q-1} {q-1 \choose \ell-1}
=O( q^{n+\ell-2} ) . $$
As $\phi (n) := 
(2n+\ell -2) (1-1/\ell) - ( n+\ell-2)$ is increasing as $n$ increases and  $\phi(1) = 0$, 
we have that 
$$\frac{q^{n}-1}{q-1} {q-1 \choose \ell-1}  = O( q^{(2n+\ell -2) (1-1/\ell)}) = O( |\f|^{1-1/\ell}). $$
Similarly, for $k$ with $2 \leq k \leq \ell-1$ and a fixed $k$-subset $R$ of $\fqn$, the number of collinear $\ell$-sets containing $R$ is at most 
$$ {q-k  \choose \ell- k } = O(q^{\ell-k})= O(q^{(2n+\ell -2) (1-k/\ell)}) = O( |\f|^{1-k/\ell}) ,  $$
where the maximum occurs when the points of $R$ are collinear. 

Suppose $\ell$ is not bounded but $\ell= O(1+ (2n-2)\ln q)$.
In particular, $n\geq 2$ and   $\ell$ and $q$ ($\geq \ell$)  go to infinity.
Then, for $I_k := \{ (S,T) \in \f \times \f : | S\cap T| =k \}$, similar arguments as above give $$  |I_1| = |\f| {\ell \choose 1}  \bb( \frac{q^n-1}{q-1} -1\bb) {q-1 \choose \ell-1}+ |\f| {\ell \choose 1} {q-\ell  \choose \ell-1}
= (1+o(1))|\f| \ell q^{n-1} {q-1 \choose \ell-1}, $$
and 
$$
|I_k|= |\f| {\ell \choose k } {q-\ell 
\choose \ell -k } \leq |\f| {\ell \choose k } {q-k 
\choose \ell -k } ~~~~~\mbox{for $2\leq k \leq \ell-1$}. 
$$
 As $\ell$ and $q$ are large enough, we may use ${ q\choose \ell} \leq \frac{1}{2} (\frac{eq}{\ell})^{\ell}$ 
and 
\beq{fbound} |\f| \leq  q^{2n-2} \bb(\frac{eq}{\ell}\bb)^{\ell}\eeq
to obtain 
\bean 
\frac{|\f |^{1/\ell}|I_1|}{|\f |} 
&\leq &  (1+o(1)) |\f |^{1/\ell} \ell q^{n-1} {q-1 \choose \ell-1}  \\
&=& (e+o(1)) \ell q^{n-1+(2n-2)/\ell} \bb(\frac{q}{\ell}\bb)
{q-1 \choose \ell-1}   \\
&=& (e+o(1)) \ell  q^{n-1+(2n-2)/\ell}  {q \choose \ell} . 
\eean 
If $n\geq 3$, then 
$$\ell q^{n-1 + (2n-2)/\ell} 
\leq q^{n + (2n-2)/\ell} = 
o( q^{2n-2}). $$
If $n=2$, 
 $$ \ell q^{(2n-2)/\ell}
 = \ell q^{2/\ell}
 \leq q^{0.2}  + (1+o(1))\ell  \leq 2q  $$ 
 and hence 
 $\ell q^{n-1 + (2n-2)/\ell}  
 \leq 2q^2 = 2q^{2n-2}. $
 Therefore, 
\beq{i1}
 \frac{|\f |^{1/\ell}|I_1|}{|\f |} 
 \leq 6 q^{2n-2} {q \choose \ell} 
 \leq 7 |\f| , \eeq
 as 
 $ |\f|=
\frac{(1-q^{-n})q^{2n-2}}{1-q^{-1}}
\binom{q}{\ell}
= (1+o(1))q^{2n-2}
\binom{q}{\ell}. $

Suppose $k\geq 2$, then \cref{fbound} gives 
\bean 
\frac{|\f |^{k/\ell}|I_k|}{|\f |} 
&\leq &  q^{k(2n-2)/\ell} \bb( \frac{eq}{\ell}\bb)^k  {\ell \choose  k} {q-k \choose \ell -k }
= (e q^{(2n-2)/\ell})^k     {\ell \choose  k} \bb(\frac{q}{\ell}\bb)^k{q-k \choose \ell -k } . 
\eean 
Since $$
\bb(\frac{q}{\ell}\bb)^k{q-k \choose \ell -k }
\leq  \frac{q^k} {q(q-1) \cdots (q-k+1)}   {q \choose \ell} 
$$
and 
$$
\frac{q^k} {q(q-1) \cdots (q-k+1)} 
= \frac{q^k (q-k)!} {q!} 
\leq  \frac{c_{_1} q^k (\frac{q-k}{e})^{q-k}}{(\frac{q}{e})^{q}} \leq \frac{c_{_1}  q^k (\frac{q}{e})^{q-k}}{(\frac{q}{e})^{q}} =c_{_1}  e^k ,
$$
for a positive constant $c_{_1}$,
it follows  that
$$
\frac{|\f |^{k/\ell}|I_k|}{|\f |}
\leq c_{_1} {q \choose \ell } {\ell \choose  k}(e^2 q^{(2n-2)/\ell})^k.
$$
For any constant $c>0$, 
$$ \sum_{k=2}^{\ell-1} c^k |\f |^{k/\ell}|I_k|
\leq  
c_{_1}  {q \choose \ell }
\sum_{k=2}^{\ell-1} {\ell \choose  k} (c \, e^2 q^{(2n-2)/\ell})^k   
\leq c_{_1} \bb(1+ c \, e^2 q^{(2n-2)/\ell}\bb)^\ell
 {q \choose \ell }. 
$$
If 
$ q^{(2n-2)/\ell} \leq 3$, then 
$$
\bb(1+ c \, e^2 q^{(2n-2)/\ell}\bb)^\ell
\leq \bb(1+ 3\, c \, e^2 \bb)^\ell, $$
and if $q^{(2n-2)/\ell} > 3$ then 
\[1+ce^{2}q^{(2n-2)/\ell} \leq \frac{q^{(2n-2)/\ell}}{3}+ce^{2}q^{(2n-2)/\ell}=\frac{(1+3ce^{2})q^{(2n-2)/\ell}}{3},\]
and hence
$$\bb(1+ c \, e^2 q^{(2n-2)/\ell}\bb)^\ell
\leq \bb( \frac{1+3c\, e^2}{3} \bb)^{\ell} q^{2n-2}. $$
Thus, $$ 
\bb(1+ c \, e^2 q^{(2n-2)/\ell}\bb)^\ell 
\leq \bb(1+3c\, e^2 \bb)^{\ell}+   
\bb( \frac{1+3c\, e^2}{3} \bb)^{\ell} q^{2n-2}. 
$$
Taking a (small) constant $c>0$ satisfying 
$$ 2 \ell \ln (1+ 3\,c \, e^2 ) \leq (2n-2) \ln q $$ 
(recall $\ell= O( (2n-2)\ln q)$) and $c\, e^2 \leq \sfrac{1}{3}$, 
we have that 
$$
3c_{_1} \bb(1+ c \, e^2 q^{(2n-2)/\ell}\bb)^\ell
\leq q^{2n-2},
$$
and hence 
$$
\sum_{k=2}^{\ell-1} c^k |\f |^{k/\ell}|I_k|
\leq  \frac{q^{2n-2}}{3} 
{q \choose \ell} 
\leq \sfrac{1}{2}|\f|^2. 
$$
This together with 
\cref{i1} yields 
$$
\sum_{k=1}^{\ell-1} c^k |\f |^{k/\ell}|I_k|
\leq \bb(\frac{7}{3 e^2} +\frac{1}{2}\bb) |\f|^2
\leq |\f|^2,
$$
as desired. 
\end{proof}

The proof of \cref{lrichsharp} depends on the following Chernoff bounds for the sum of independent random variables.

Let $p \in [0,1]$.
Let $X_1, X_2, \ldots, X_n$ be mutually independent with $\Pr[X_i = 1] = p$ and $\Pr[X_i = 0] = 1-p$.
Let $X = X_1 + X_2 + \ldots + X_n$.

We denote $e^x$ by $\exp(x)$.

A special case of Theorem A.1.15 in ``The Probabilistic Method" by Alon and Spencer \cite{alon} is that
\begin{equation}\label{eq:lowerChernoff}
\Pr[X < (1-\epsilon)pn] < \exp(-\epsilon^2 pn / 2) \end{equation}
for any $\epsilon \in (0,1)$.

Another special case of Theorem A.1.15 is
\begin{equation}\label{eq:upperChernoff}
\Pr[X > (1+\epsilon)pn] < \exp((\epsilon - (1+\epsilon) \ln(1+\epsilon))pn)\end{equation}
for any $\epsilon > 0$.
We use the inequality $\ln(1+\epsilon) \geq 2\epsilon/(2 + \epsilon)$, valid for $\epsilon \geq 0$, to obtain
\begin{equation}\label{eq:simplifiedUpperChernoff}
\Pr[X > (1+\epsilon)pn] < \exp(-\epsilon^2pn/(2+\epsilon))
\end{equation}
from \cref{eq:upperChernoff}.

\lrichsharp*

\begin{proof}

Let $Y$ be the number of $m$-flats in $\mathbb{F}_q^n$ that each contain at least $\ell$ points of $X$, and let $\mathcal{L}$ be the set of $m$-flats in $\mathbb{F}_q^n$.

For $\epsilon \in (0,1)$, suppose that $\theta = \ell q^{-m} (1+\epsilon)^{-1} = (1-\epsilon')\ell q^{-m}$, where $\epsilon'=\epsilon(1+\epsilon)^{-1}$.
Let $L$ be an $m$-flat.
By \cref{eq:simplifiedUpperChernoff},
\[\Pr[|L \cap X| \geq \ell] \leq \exp\left(\frac{-\epsilon^2 \theta q^m}{(2+\epsilon)} \right) = \exp\left(\frac{-\epsilon^2 \ell}{(2+\epsilon)(1+\epsilon)} \right).\]
Summing over all $m$-flats, the expected value of $Y$ is
\[\ee[Y] = \ee\left [\sum_{L \in \mathcal{L}}  \mathbf{1}(|L \cap X| \geq \ell)\right] = \sum_{L \in \mathcal{L}} \ee[ \mathbf{1}(|L \cap X| \geq \ell)] \leq q^{O((n-m)m)} \exp\left(\frac{-\epsilon^2 \ell}{(2+\epsilon)(1+\epsilon)} \right). \]
Since $\ell = \omega((n-m)m \log(q))$, this implies that $\ee[Y] \rightarrow 0$.

By Markov's inequality, the expectation of $Y$ is an upper bound on the probability of $\Gamma$:
\[\Pr[\Gamma] = \Pr[Y\geq 1] \leq \ee[Y] \rightarrow 0.\]

Now for $\epsilon \in (0,1)$, suppose that $\th = \ell q^{-m}(1-\epsilon)^{-1} = (1+\epsilon')\ell q^{-m}$, where $\epsilon' = \epsilon(1-\epsilon)^{-1}$.
Let $L$ be an $m$-flat.
By \cref{eq:lowerChernoff},
\[\Pr[|L \cap X| < \ell] \leq \exp\left(\frac{-\epsilon^2 \theta q^{m}}{2} \right) =  \exp\left(\frac{-\epsilon^2 \ell}{2(1-\epsilon)} \right).\]

Since $\ell = \omega(1)$, we have that $\Pr[|L \cap X| < \ell] \rightarrow 0$.
Hence,
\[\Pr[Y \geq 1] \geq \Pr[|L \cap X| \geq \ell] = 1 - \Pr[|L \cap X| < \ell] \rightarrow 1. \]

\end{proof}

\mflat*

The proof of \cref{m-flat} has an inductive step that relies on the following corollary, which uses a slightly different probabilistic model.

\begin{cor}\label{th:mFlatMNModel}
Suppose $m \geq 2$ is bounded and $q$ is a prime power with $q \rightarrow \infty$.
Let $\ell \geq m+2$ be a constant integer, and let $X$ be a uniformly random set of $\ell$ points in $\FF_q^m$.
Let $\Lambda = \Lambda(m,\ell)$ be the event that there is an $(m+1)$-rich $(m-1)$-flat with respect to $X$.
Then, $\Pr[\Lambda] \rightarrow 0$.
\end{cor}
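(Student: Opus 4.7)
The plan is to apply a direct first-moment argument, without invoking \cref{new} or \cref{thm:revision}. Let $Z$ count the $(m+1)$-subsets $S \subseteq X$ whose points all lie on a common $(m-1)$-flat. Since $\Lambda$ occurs iff $Z \geq 1$, Markov's inequality reduces the task to showing $\ee[Z] = o(1)$ as $q \to \infty$.

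By linearity of expectation, $\ee[Z] = N \cdot p$, where $N$ is the number of $(m+1)$-subsets of $\FF_q^m$ contained in some $(m-1)$-flat and $p = \binom{\ell}{m+1}/\binom{q^m}{m+1}$ is the probability that any fixed $(m+1)$-subset is a subset of the uniformly random $\ell$-set $X$ (this is the only place where the uniform-$\ell$-subset model, rather than the Bernoulli model, enters; it is computed directly as a ratio of binomial coefficients). To bound $N$ I would sum over hyperplanes: by the formulas preceding the proof of \cref{l-col}, there are $q\binom{m}{m-1}_q = \Theta(q^m)$ hyperplanes in $\FF_q^m$, and each contains $\binom{q^{m-1}}{m+1} = \Theta(q^{(m-1)(m+1)})$ subsets of size $m+1$, giving $N = O(q^{m + (m-1)(m+1)})$. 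The overcounting (an $(m+1)$-subset lying in a flat of dimension $<m-1$ is counted by every hyperplane through it) only strengthens the upper bound and is therefore harmless.

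Combining with $\binom{q^m}{m+1} = \Theta(q^{m(m+1)})$, and treating $m$ and $\ell$ as bounded constants,
\[
\ee[Z] \ = \ N \cdot p \ = \ O\!\bigl(q^{\,m + (m-1)(m+1) - m(m+1)}\bigr) \ = \ O(q^{-1}),
\]
since the exponent simplifies via $(m-1)(m+1) - m(m+1) = -(m+1)$ to $m-(m+1)=-1$. Hence $\ee[Z] \to 0$, completing the argument. No serious obstacle arises: the count of hyperplanes, the size of each hyperplane, and the subset probability are all elementary, and the hypothesis $\ell \geq m+2$ merely ensures the event is nontrivial (the bound holds trivially already for $\ell = m+1$).
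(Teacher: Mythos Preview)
Your proof is correct and considerably more direct than the paper's. The paper proceeds by a coupling argument: it sets $\theta = \ell q^{-m}$, lets $X'$ be the Bernoulli-$\theta$ random subset of $\FF_q^m$, and invokes \cref{m-flat} (or \cref{l-col} when $m=2$) with the dimension parameter lowered by one---i.e., the inductive hypothesis in the proof of \cref{m-flat}---to conclude that the threshold for an $(m+1)$-rich $(m-1)$-flat is $q^{-(m-1+m/(m+1))} \gg \theta$, whence $\Pr[\Gamma] \to 0$; it then compares models by noting $\Pr[\Gamma] \geq \Pr[\Lambda]\Pr[|X'|=\ell]$ with $\Pr[|X'|=\ell] = \Theta(1)$. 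Your first-moment computation bypasses both the induction and the model change, and even yields the explicit rate $\Pr[\Lambda] = O(q^{-1})$. The paper's route is natural within its inductive framework for \cref{m-flat}, but your argument shows that this particular corollary stands on its own without any appeal to the threshold theorems.
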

\begin{proof}
Let $\theta = \ell q^{-m}$, let $X'$ be a random subset of $\FF_q^m$ such that each element of $\FF_q^m$ is in $X$ with probability $\theta$, independently of all other elements.
Let $\Gamma$ be the event that there is an $(m+1)$-rich $(m-1)$-flat with respect to $X'$.
By \cref{m-flat} (or \cref{l-col} in the case $m=2$), 
$\Gamma$ has a threshold function $t = q^{-(m - 1 + m/(m+1))}$.
Since $t > \theta$ for $q$ sufficiently large, we have $\Pr[\Gamma] \rightarrow 0$.

We use a standard coupling argument to show that $\Pr[\Lambda] = O(\Pr[\Gamma])$.
In more detail,
\begin{align*}
    \Pr[\Gamma] &= \sum_{k=0}^{q^m} \Pr[\Gamma \, \vert \, |X'|=k] \Pr[|X'| = k] \\
    &= \sum_{k=0}^{q^m} \Pr[\Lambda(m,k)] \Pr[|X'| = k] \\
    &\geq \Pr[\Lambda] \Pr[|X'| = \ell].
\end{align*}
Using the inequality $(1-x) < e^x$ to bound $\Pr[|X'|=\ell]$:
\begin{align*}
    \Pr[|X'| = \ell] &= \binom{q^m}{\ell} \theta^\ell (1-\theta)^{q^m - \ell} \\
    &= (1-o(1)) \frac{\ell^\ell}{\ell!} (1-\ell q^{-m})^{q^m} \\
    &= O(1-\ell q^{-m})^{q^m} \\
    &= O(e^{-\ell}) = O(1).
\end{align*}
Consequently, we have $\Pr[\Lambda] = O(\Pr[\Gamma])$, which implies the result.
\end{proof}

\begin{proof}[Proof of \textup{\cref{m-flat}}]
Let $\mathcal{G}$ be the family of $\ell$-subsets $S$ of $\F$ such that no $m-1$ flat contains more than $m$ points of $S$.
Let $\G_\mathcal{G} = \G(\mathcal{G}; \th)$ be the event that $X$ contains an element of $\mathcal{G}$.

Let $t_{\f}$ be a threshold function for $\G$.
Since every element of $\mathcal{G}$ is also an element of $\f$, there is a threshold function $t_{\mathcal{G}}$ for $\G_\mathcal{G}$ such that $t_{\mathcal{G}} > t_{\f}$.
Furthermore, one can show that \textup{Pr[$\G$]}$\to 0$ if $\theta=o(|\f|^{-1/\ell})$ using Markov's inequality, similarly to the argument used in the proof of \cref{new}. 
Hence, $t_{\f}=\Omega(|\f|^{-1/\ell})$.

In what follows, we show that
\begin{enumerate}
    \item $t_{\mathcal{G}} = O(|\mathcal{G}|^{-1/\ell})$, and
    \item $|\mathcal{G}| = \Omega(|\f|)$.
\end{enumerate}
Combined with the observations in the previous paragraph, this leads to the following sequence of inequalities:
\begin{equation}\label{rel}
\Omega(|\f|^{-1/\ell})= t_{\f} \leq t_{\mathcal{G}} = O(|\mathcal{G}|^{-1/\ell}) = O(|\f|^{-1/\ell}).
\end{equation}
Therefore, $t_{\f}=|\f|^{-1/\ell}$ is a threshold function for $\G$.

We use induction to show that $|\mathcal{G}| = (1-o(1))|\f|$.
Suppose that $S$ is a uniformly random set of $\ell$ points in $\F$.
Since $\mathcal{G} \subset \f$, we have
\[\Pr[S \in \mathcal{G} | S \in \f] = \frac{|\mathcal{G}|}{|\f|}. \]
$\Pr[S \notin \mathcal{G} | S \in \f]$ is exactly the probability that a uniformly random set of $\ell$ points in $\mathbb{F}_q^m$ contains a subset of size $m+1$ that is contained in an $(m-1)$-flat.
By \cref{th:mFlatMNModel}, $\Pr[S \notin \mathcal{G} | S \in \f] \rightarrow 0$, and hence $|\mathcal{G}| \, |\f|^{-1} \rightarrow 1$, as desired.

We now apply \cref{thm:revision} for $\mathcal{G}$. Note that the number of $m$-planes in $\mathbb{F}_{q}^{n}$ containing $t$ fixed points in general position is $\binom{n-t+1}{m-t+1}_{q}$, where $1 \leq t \leq m$.
Thus, the number of $\ell$-rich $m$-flats in general position containing $t$ fixed points is bounded by
\[\binom{n-t+1}{m-t+1}_{q}\binom{q^{m}-t}{\ell-t}=(1+o(1))q^{(n-t+1)(m-t+1)-(m-t+1)^{2}+m(\ell-t)},\]
which is also bounded by 
\begin{equation}\label{Gsize}
    |\mathcal{G}|^{1-t/\ell}=(1+o(1))q^{((n-m)(m+1)+m\ell)(1-t/\ell)}
\end{equation}
when $q$ is large enough since $\ell>m$. For $m+1 \leq t \leq \ell -1$, the number of $\ell$-rich $m$-flats in general position containing $t$ fixed points is bounded by $\binom{q^{m}-t}{\ell -t}$, which is also bounded by \cref{Gsize}. This verifies the assumption of \cref{thm:revision}, showing that the event $\G_{\mathcal{G}}=\G(\mathcal{G};\th)$ has a coarse threshold function $t_{\mathcal{G}}=|\mathcal{G}|^{-1/\ell}=q^{-((n-m)(m+1)+m\ell)/\ell}$ as $q\to \infty$. This completes the proof by \cref{rel}.
\end{proof}

\section{Contained flats and blocking sets}

In this section, we prove \cref{m-planes} and \cref{th:blockingSetThreshold}.
These both follow as corollaries to the following theorem on the probability that a random set of points contains an $m$-flat.

\begin{thm}\label{th:strongM-plane}
Let $m,n$ be integers and $q$ a prime power with $n+q \rightarrow \infty$ and $m$ constant with $2m<n$.
Let $t \rightarrow \infty$.
Let $\f$ be the family of $m$-flats in $\F$, and let $\ell=q^m$ be the number of points in an $m$-flat.
Let $X$ be a random subset of $\F$ such that each point of $\F$ is in $X$ with probability $\th$, independently of all other points, and let $\G = \G[\th;m]$ be the event that some $m$-flat is entirely contained in $X$. 
Then,
\begin{enumerate}
    \item if $\th < (|\f|t)^{-1/\ell}$, then $\Pr[\G] \rightarrow 0$, and
    \item if $\th > (|\f|t^{-1})^{-1/\ell}$, then $\Pr[\G] \rightarrow 1$.
\end{enumerate}
\end{thm}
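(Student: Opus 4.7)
The plan is to prove part (1) by Markov's inequality and part (2) by the second moment method (Chebyshev), in the spirit of \cref{th:2MomentThreshold} but adapted to the intersection structure of $m$-flats. Let $Y$ denote the number of $m$-flats entirely contained in $X$, so that $\G = \{Y \geq 1\}$ and $\mathbb{E}[Y] = |\f|\,\th^\ell$. Part (1) is immediate: if $\th < (|\f|t)^{-1/\ell}$, then $\mathbb{E}[Y] < t^{-1}$, and Markov's inequality yields $\Pr[\G] \leq \mathbb{E}[Y] \to 0$.

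For part (2), assume $\th > (|\f|/t)^{-1/\ell}$, so that $\mathbb{E}[Y] > t$. Since two distinct $m$-flats intersect in either $\emptyset$ or an affine subspace, $|L \cap L'| \in \{0, 1, q, q^2, \ldots, q^m\}$. Writing $I_{q^i}$ for the set of ordered pairs $(L,L')$ with $|L \cap L'| = q^i$, expanding $\mathbb{E}[Y^2]$ over intersection type and applying Chebyshev's inequality gives
\[\Pr[Y = 0] \leq \frac{1}{\mathbb{E}[Y]} + \sum_{i=0}^{m-1}\frac{|I_{q^i}|\,\th^{-q^i}}{|\f|^2} < \frac{1}{t} + \sum_{i=0}^{m-1}\frac{|I_{q^i}|\,(|\f|/t)^{q^{i-m}}}{|\f|^2},\]
where the disjoint pairs ($k=0$) contribute at most $\mathbb{E}[Y]^2$ and so cancel against $-\mathbb{E}[Y]^2$ in the variance, and the diagonal pairs ($k=q^m$) yield the $1/\mathbb{E}[Y]$ term. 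Since the first summand tends to zero, it suffices to show that each of the remaining $m$ summands does.

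The key computational step is to invoke the Chen--Greenhill estimate $|I_{q^i}| \leq q^{n-i}\binom{n}{i}_q \binom{n-i}{m-i}_q^2$ (Lemma~4.2 of \cite{chen}) together with $q^{(n-m)(m+1)} \leq |\f| \leq 4 q^{(n-m)(m+1)}$ to reduce each summand to
\[O\!\left(q^{-(i+1)(n+i-2m)\,+\,(n-m)(m+1)q^{i-m}}\cdot t^{-q^{i-m}}\right),\]
at which point the hypothesis $2m < n$ makes the first term of the $q$-exponent strictly negative. The main obstacle I anticipate is verifying that this quantity tends to zero in every regime compatible with $n+q \to \infty$, for which I split into two cases. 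When $q \to \infty$, the correction $(n-m)(m+1)q^{i-m} = O((n-m)/q)$ is eventually dominated by $\tfrac12(i+1)(n-2m)$, so the $q$-exponent is at most $-\tfrac12(n-2m)$ and the factor $q^{-(n-2m)/2}$ tends to zero. When $q$ is bounded (forcing $n \to \infty$), the coefficient of $n$ in the exponent equals $-(i+1) + (m+1)q^{i-m}$, which is non-positive because $(i+1)q^{m-i} \geq m+1$ for every $q \geq 2$ and $0 \leq i \leq m-1$, and is strictly negative except in the single borderline case $(m,q,i) = (1,2,0)$; there, direct computation shows the $q$-exponent equals the constant $1$, and the factor $t^{-q^{i-m}} = t^{-1/2}$ tending to zero rescues the argument. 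Summing over the finitely many $i$ then yields $\Pr[Y = 0] \to 0$, completing part (2).
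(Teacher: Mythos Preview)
Your proof is correct and follows essentially the same approach as the paper: Markov for part~(1), and the second-moment method with the intersection estimate $|I_{q^i}| \leq q^{n-i}\binom{n}{i}_q\binom{n-i}{m-i}_q^2$ for part~(2). The differences are cosmetic: the paper packages the Chebyshev step by invoking \cref{th:2MomentThreshold} with an auxiliary parameter $s\leq t$ and splits into the cases $n$ bounded versus $n\to\infty$, whereas you apply Chebyshev directly and split on $q\to\infty$ versus $q$ bounded; and in the borderline case $(m,q)=(1,2)$ the paper sidesteps the second-moment computation by observing that $\Gamma$ is simply the event $|X|\geq 2$, while you push the computation through using the surviving factor $t^{-1/2}$, which is a slightly more uniform treatment.
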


\begin{proof}
First suppose that $\th < (|\f|t)^{-1/\ell}$.
Let $Y$ be the number of $m$-flats contained in $X$.
Using Markov's inequality,
\[\Pr[\G] = \Pr[Y \geq 1] \leq \ee[Y] = \th^\ell|\f| < t^{-1} \rightarrow 0,\]
as desired.

Now suppose that $\th > (|\f|t^{-1})^{-1/\ell}$, and let $\th'=(|\f|s^{-1})^{-1/\ell} \geq \th$, where $s \leq t$ and $s \rightarrow \infty$ is a function we will choose later.
Since $\Pr[\G[\th;m]]$ increases monotonically with $\th$, it is enough to show that $\Pr[\G[\th';m]] \rightarrow 1$.

To apply \cref{th:2MomentThreshold}, we should bound $I_k$.
This is non-zero only when $k$ is the number of points contained in some $d$-flat, for $0 \leq d < m$.
For $k=q^d$, we have that $I_k$ is bounded from above by the number of $d$-flats times the square of the number of $m$-flats that contain a fixed $d$-flat.
Hence, using \cref{eq:qBinBound}, for integers $d$ with $0 \leq d < m$,
\begin{equation}\label{eq:Iqd}
|I_{q^d}| \leq 4^3q^{(n-d)(d+1)+2(n-m)(m-d)},\end{equation}
and $|I_k| = 0$ otherwise.
Note, if $n+d \leq 2m$, then this would be weaker than the trivial bound of $|I_k| < |\f|^2$.
This is avoided by the assumption that $2m < n$.

Now we apply \cref{th:2MomentThreshold} with $c'=s^{-1}$ and $c=s^{-1/\ell}$.
Since $m$ is bounded, it is enough to show that
\[(|\f|s^{-1})^{k/\ell} |I_k| = o(|\f|^2 s^{-1}) \]
for each $k = q^d$ with $0 \leq d < m$.
Equivalently,
\begin{equation}(|\f|s^{-1})^{k/\ell} s = o(|\f|^2 \, |I_k|^{-1}). \end{equation}
Using \cref{eq:Iqd}, we have
 $|\f|^2 \, |I_k|^{-1} = \Omega(q^{(n-2m+d)(d+1)})$,
so it is enough to show 
\begin{equation}\label{eq:f2IkBlocking}
(|\f|s^{-1})^{q^{d-m}} s = o(q^{(n-2m+d)(d+1)})
 \end{equation}
 as $n+q \rightarrow \infty$.
 
Suppose that $n$ is bounded, and hence by assumption $q \rightarrow \infty$.
Choose $s \leq t$ such that $s = O(q^{n-2m-\epsilon})$  for some $0 < \epsilon < 1$ and $s \rightarrow \infty$;  this is possible since $2m < n$.
Now, \cref{eq:f2IkBlocking} is
\[q^{(nm - m^2 + m + \epsilon)q^{d-m} + n-2m-\epsilon} = o(q^{(n-2m+d)(d+1)}), \]
which holds for $0 \leq d < m$ and $q$ sufficiently large.
Hence, \cref{th:2MomentThreshold} implies that $\Pr[\G] \geq 1 - 2 s^{-1} \rightarrow 1$, as desired.
 
Otherwise, $n \rightarrow \infty$, and it is enough to show that 
\[q^{(n-2m+d)(d+1)}|\f|^{-q^{d-m}} = \Omega(q^{(n-2m+d)(d+1) - q^{d-m}(n-m)(m+1)})\]
is not bounded for any $d$, so that \cref{eq:f2IkBlocking} is satisfied for some $s \rightarrow \infty$.
Dropping constant terms in the exponent, we need $n((d+1) - q^{d-m}(m+1)) > 0$ for all $0 \leq d < m$.
This holds, except in the case $m=1$ and $q=2$.
In this case, $\G$ is just the event that $X$ contains at least $2$ points.
\end{proof}

\mplanes*

\begin{proof}
We have
\[(t|\f|)^{-1/\ell} = \Theta(t^{-q^{-m}}q^{-(n-m)(m+1)q^{-m}}) = \Theta(t^{-q^{-m}}q^{-n(m+1)q^{-m}}). \]

If $q$ is bounded and
\[\th = o(q^{-n(m+1)q^{-m}}) = \omega(1)^{-q^{-m}}q^{-n(m+1)q^{-m}}, \]
then \cref{th:strongM-plane} implies that $\Pr[\G] \rightarrow 0$.

If we have the three conditions $q \rightarrow \infty$, $\epsilon > 0$ is constant, and
\[\th= (1-\epsilon)q^{-n(m+1)q^{-m}} = ((1-\epsilon)^{-q^{m}})^{-q^{-m}} q^{-n(m+1)q^{-m}} = \omega(1)^{-q^{-m}} q^{-n(m+1)q^{-m}}, \]
then \cref{th:strongM-plane} implies that $\Pr[\G] \rightarrow 0$.

The other bounds follow similarly.
\end{proof}

\blockingSetThreshold*

\begin{proof}
Note that $X$ is an $m$-blocking set if and only if $X^c$ does not contain an $m$-flat.
We will use the estimate $|\f| = (1+o(1))q^{(n-m)(m+1)} = q^{\Theta(n)}$, as well as $1-x < e^{-x}$ valid for $x>0$, and $1-x > 1-2x+2x^2 > e^{-2x}$, valid for $0<x<1/2$.

Suppose that $\th = \omega(nq^{-m}\log(q))$.
Then,
\[1-\th < e^{-\th}  = e^{-\omega(n q^{-m} \log(q))} = q^{-\omega(n q^{-m})} = |\f|^{-\omega(1/\ell)} = (t|\f|)^{-1/\ell}\]
for some $t \rightarrow \infty$.
Hence, by \cref{th:strongM-plane}, the probability that $X^c$ contains an $m$-flat approaches $0$, and so the probability that $X$ is an $m$-blocking set approaches $1$.

Now suppose that $\th = o(nq^{-m}\log(q))$.
Then, for $q$ large enough,
\[1-\th > e^{-2\th} =  e^{-o(n q^{-m} \log(q))} = q^{-o(n q^{-m})} = |\f|^{-o(1/\ell)} = (t^{-1}|\f|)^{-1/\ell}\]
for some $t \rightarrow \infty$, as desired. 
Note that we used the assumption that $n=o(q^m(\log q)^{-1})$, and hence $\th = o(1)$, in the first inequality.
\end{proof}

\section{Point-flat incidences}

This section contains the proof of \cref{th:incidenceThreshold}.

\incidenceThreshold*

\begin{proof}
Let $\f$ be the set of pairs $(x,y)$ of a point $x$ and an $m$-flat $y$ in $\F$ so that $x \in y$.
Then $|\f| = \Theta(q^{n+(n-m)m}) = \Theta(q^{(n-m)(m+1) + m})$. 
To apply \cref{new}, we only need to show that
\begin{equation}
\label{eq:I1incidences}
|I_1| = O(|\f|^{3/2}).\end{equation}
A pair in $I_1$ either shares a point, or an $m$-flat, so $|I_1| = \Theta(q^n q^{2(n-m)m}) + \Theta(q^{(n-m)(m+1)}q^{2m}) = \Theta(q^{n + 2(n-m)m})$.
Combining this with the bound on $|\f|$ and rearranging, we have that \cref{eq:I1incidences} holds if $(n-m)m \leq n$.
This holds if $m = n-1$, or $m=1$, or $n=4$ and $m=2$.
For the case $n=4$, $m=2$, we have $(1/2)(n+(n-m)m) = n$, so we have the alternate threshold $q^{-n}$. 

If $\th = o(q^{-n})$, then $\Pr[|X| > 0] \rightarrow 0$, and hence $\Pr[\G] \rightarrow 0$.
Now suppose that $\th = \omega(q^{-n})$.
We will use \cref{th:2MomentThreshold} with $c=q^n|\f|^{-1/2}$ and $c' \rightarrow 0$.
Note that $c \rightarrow 0$ if and only if $|\f| = o(q^{2n})$, or equivalently $n + (n-m)m > 2n$.
This holds exactly when $m \neq 1$ and $m \neq n-1$.
With these parameters, the condition of \cref{th:2MomentThreshold} is that $|I_1| \leq o(|\f|^2 q^{-n})$.
From \cref{eq:I1incidences}, this holds exactly when $|\f| = o(q^{2n})$, which we have already checked.
\end{proof}

\section{Acknowledgements}
Jeong Han Kim was partially supported by National Research Foundation of Korea (NRF) Grants funded by the Korean Government (MSIP) (NRF-2016R1A5A1008055 \& 2017R1E1A1A0307070114) and by a KIAS Individual Grant(CG046002) at Korea Institute for Advanced Study. B. Lund was supported by the Institute for Basic Science (IBS-R029-C1). T. Pham was supported by the National Foundation for Science and Technology Development (NAFOSTED) Project 101.99-2021.09. S. Yoo was supported by the KIAS Individual Grant (CG082701) at Korea Institute for Advanced Study.

T. Pham would like to thank the VIASM for the hospitality and for the excellent working condition.
The authors are grateful to the reviewers for valuable comments.


\begin{thebibliography}{99}
\bibitem{alon}
N. Alon and J. H. Spencer, \textit{The probabilistic method}, fourth edition, Wiley (2016).
\bibitem{bolo2}
B. Bollob\'{a}s and A. Thomason, \textit{Threshold functions}, Combinatorica, \textbf{7}(1) (1987), 35--38.
\bibitem{bszsz}
A. Blokhuis, P. Sziklai, and T. Szonyi, \textit{Blocking sets in projective spaces}, in \textit{Current research topics in Galois geometry}, Nova Sci. Publ., New York (2011): 61--84.
\bibitem{chen}
C. Chen and C. Greenhill, \textit{Threshold functions for substructures in random subsets of finite vector spaces}, Journal of Combinatorics, \textbf{12}(1) (2021): 157--183.
\bibitem{CLP}
E. Croot, V.F. Lev, P.P. Pach, \textit{Progression-free sets in are exponentially small}, Annals of Mathematics, \textbf{185}(1) (2017) 331--7.
\bibitem{capset}
J. Ellenberg and D. Gijswijt, \textit{On large subsets of $\mathbb{F}_q^n$ with no three-term arithmetic progression}, Annals of Mathematics, \textbf{185}(1) (2017), 339--443.
\bibitem{ER1960}
P. Erd\H{o}s and and A. R\'{e}nyi, \textit{On the evolution of random graphs}, Publications of the Mathematical Institute of the Hungarian Academy of Sciences, \textbf{5} (1960), 17--61.
\bibitem{CITE}
K. Frankston, J. Kahn, B. Narayanan, J. Park, \textit{Thresholds versus fractional expectation-thresholds}, Annals of Mathematics, \textbf{194} (2) (2021): 475--495.
\bibitem{hoi}
N. H. Hoi, \textit{On two-point configurations in a random set}, Integers, \textbf{9}, 41--45.
\bibitem{WW}
W. H. Haemers, \textit{Eigenvalue techniques in design and graph theory},  Number 121. Mathematisch centrum Amsterdam, 1980.
\bibitem{ls}
B. Lund and S. Saraf, \textit{Incidence bounds for block designs,} SIAM Journal on Discrete Mathematics 30.4 (2016): 1997--2010.
\bibitem{lsw}
B. Lund, S. Saraf, and C. Wolf. \textit{Finite field Kakeya and Nikodym sets in three dimensions}, SIAM Journal on Discrete Mathematics 32.4 (2018): 2836--2849.
\bibitem{MV} J. Matou\v sek and J. Vondr\' ak, \textit{The Probabilistic Method}, Lecture Notes (2008).
\bibitem{szgw}
T. Sz\H{o}nyi, A. Gács, and Z. Weiner. \textit{On the spectrum of minimal blocking sets in $PG(2, q)$}, Journal of Geometry, \textbf{76}(1) (2003), 256--281.
\bibitem{vinh1}
L. A. Vinh, \textit{On point-line incidences in vector spaces over finite fields}, Discrete applied mathematics, \textbf{177}(1) (2014): 146--151.
\bibitem{vinh2}
L. A. Vinh, \textit{The Szemer\'{e}di–Trotter type theorem and the sum-product estimate in finite fields}, European Journal of Combinatorics, \textbf{32}(8) (2011): 1177--1181.

\end{thebibliography}
\end{document}